\newtheorem{theorem}{Theorem}[section]
\newtheorem{corollary}[theorem]{Corollary}
\newtheorem{example}[theorem]{Example}
\newtheorem{lemma}[theorem]{Lemma}
\newtheorem{proposition}[theorem]{Proposition}
\newtheorem{remark}[theorem]{Remark}
\newenvironment{proof}[1][Proof]{\textbf{#1.} }{\ \rule{0.5em}{0.5em}}
\begin{document}

\title{Breuil-Kisin Modules and Hopf Orders in Cyclic Group Rings}
\author{Alan Koch\\Agnes Scott College}
\maketitle

\begin{abstract}
For $K$ a finite extension of $\mathbb{Q}_{p}$ with ring of integers $R$ we
show how Breuil-Kisin modules can be used to determine Hopf orders in $K$-Hopf
algebras of $p$-power dimension. We find all cyclic Breuil-Kisin modules, and
use them to compute all of the Hopf orders in the group ring $K\Gamma$ where
$\Gamma$ is cyclic of order $p$ or $p^{2}.$ We also give a Laurent series
interpretation of the Breuil-Kisin modules that give these Hopf orders.

\end{abstract}

Let $R$ be a complete discrete valuation ring of mixed characteristic $\left(
0,p\right)  $ with quotient field $K$ and perfect residue field $k.$ Let
$e=e\left(  K/\mathbb{Q}_{p}\right)  $ be the absolute ramification index.
Then $R$ is an extension of $W:=W\left(  k\right)  ,$ the ring of Witt vectors
with coefficients in $k$, and we may write $K=K_{0}\left[  x\right]  /\left(
E\left(  x\right)  \right)  ,$ where $K_{0}=$Frac$\left(  W\right)  $ and
$E\left(  x\right)  $ is an Eisenstein polynomial. For any finite group
$\Gamma$ the group rings $R\Gamma$ and $K\Gamma$ have the structure of Hopf
algebras over $R$ and $K$ respectively. Clearly we have $R\Gamma\subset
K\Gamma,$ a relationship we can express by extension of scalars:
$R\Gamma\otimes_{R}K\cong K\Gamma.$ However, if $\Gamma$ is a $p$-group then
the $R$-Hopf algebra $R\Gamma$ is not uniquely determined by this isomorphism,
i.e. there exist other finitely generated projective $R$-Hopf algebras
$H\subset K$ such that $H\otimes_{R}K\cong K\Gamma.$ Such an $H$ must
necessarily contain $R\Gamma$ \cite[5.2]{Childs00}, and $H$ is called an
$R$-Hopf order in $K\Gamma.$ More generally, given a $K$-Hopf algebra $A,$ an
$R$-Hopf order is a submodule of $A$ which is an $R$-Hopf algebra such that
the extension of scalars from $R$ to $K$ produces an isomorphism with $A$.

The classification of Hopf orders is useful in local Galois module theory as
it helps in solving the normal integral basis problem \cite{ChildsMoss94},
particularly Hopf orders in group rings. If $p$ does not divide the order of
the group $\Gamma$ then the answer is simple: the only $R$-Hopf order in
$K\Gamma$ is $R\Gamma$ \cite[20.3]{Childs00}$.$ Thus we consider the cases
where $\left|  \Gamma\right|  =p^{n}$ for some $n>0.$ Two examples of such
groups (and the only examples for $n\leq2$) are $\Gamma=C_{p^{n}}$ and
$\Gamma=\left(  C_{p}\right)  ^{n},$ where $C_{m}$ is the cyclic group of
order $m$. The former case, particularly for $n\leq3,$ has been studied by
Byott, Childs, Greither, Underwood et al \cite{Byott93a} \cite{Byott93b}
\cite{ChildsUnderwood04} \cite{Greither92} \cite{Underwood94}
\cite{Underwood96} \cite{UnderwoodChilds06}. In the latter, Childs, Greither,
and Smith \cite{GreitherChilds98} \cite{ChildsSmith05} have found some Hopf
orders in certain circumstances.

Our contribution in the elementary abelian case was the introduction of Breuil
modules to the problem. Using Breuil's theory of ``filtered free'' modules
\cite{Breuil00} we get a categorical equivalence between $R$-Hopf algebras
such that the endomorphism $\left[  p\right]  :=($mult$)^{p}\circ\left(
\Delta\otimes1\right)  ^{p}$ is trivial, and a collection of Breuil modules,
specifically free $k\left[  u\right]  /\left(  u^{pe}\right)  $-modules
$\mathcal{M}$ with a $k\left[  u\right]  /\left(  u^{pe}\right)  $-submodule
$\mathcal{M}_{1}\subset u^{e}\mathcal{M}$ and a map $\phi_{1}:\mathcal{M}%
_{1}\rightarrow\mathcal{M}$ satisfying certain properties. In \cite{Koch07a}%
\ it was shown that the Hopf algebra corresponding to a rank $n$ Breuil module
$\mathcal{M}$ was a Hopf order in $\left(  KC_{p}\right)  ^{n}$ if and only if
$\mathcal{M}_{1}$ contained $n$ elements $x_{1},\dots,x_{n}\;$which are
$\mathbb{F}_{p}$-linearly independent and such that $\phi_{1}\left(
x_{i}\right)  =x_{i}$ for all $i$.

Unfortunately, Breuil modules do not easily generalize to the case where
$\Gamma$ is not elementary abelian. The ring $k\left[  u\right]  /\left(
u^{pe}\right)  $ must be replaced with the $p$-adic completion of the divided
power envelope of $W\left[  u\right]  $ with respect to the ideal $\left(
E\left(  u\right)  \right)  .$ Furthermore, the submodule $\mathcal{M}_{1}$
must contain the $p$-adic completion of the ideal generated by $\left(
E\left(  u\right)  \right)  ^{i}/i!$ for all $i\geq1$, making these modules
hard to classify. Another drawback to using Breuil module theory for Hopf
orders is that the category of Breuil modules is not abelian, and while every
Hopf order $H$ in $K\Gamma$ comes with an injection $R\Gamma\hookrightarrow
H,$ the corresponding monomorphism of Breuil modules is generally not
one-to-one. While in \cite{Koch07a}\ we found a criterion for a map
$\mathcal{M\rightarrow M}^{\prime}$ of Breuil modules of the same rank to be
one-to-one, in general this appears to be a difficult problem.

In this paper, we show how one can find Hopf orders in Hopf algebras of rank
$p^{n}$ not necessarily killed by $\left[  p\right]  $ using a related theory
we call Breuil-Kisin modules. Along with the theory above, Breuil
\cite{Breuil98} conjectured another correspondence between a certain category
of modules and finite flat commutative group schemes (and hence finite
projective commutative, cocommutative Hopf algebras).\ The conjecture, with a
slight modification, was later proved by Kisin \cite{Kisin06}, and provides a
simplification to the theory above. A Breuil-Kisin module is a module over
$W\left[  \left[  u\right]  \right]  $ with a map $\phi$ satisfying certain
properties.\ These properties depend on the Eisenstein polynomial, but in a
simpler manner than for Breuil modules. An indication of the usefulness of
Breuil-Kisin theory is the following fact: two Breuil-Kisin modules
$\mathfrak{M}$ and $\mathfrak{M}^{\prime}$ correspond to generically
isomorphic $K$-Hopf algebras if and only if $\mathfrak{M}\left[
u^{-1}\right]  \cong\mathfrak{M}^{\prime}\left[  u^{-1}\right]  ,$ i.e.
$\mathfrak{M}$ and $\mathfrak{M}^{\prime}$ become isomorphic over the ring of
Laurent series $W\left(  \left(  u\right)  \right)  .$

We start with a review of the Breuil-Kisin theory, its connection to Breuil
modules as well as to $R$-Hopf algebras. We explain how Hopf orders are easier
to identify with Breuil-Kisin modules than with Breuil modules. Then, we look
at the simplest class of Breuil-Kisin modules: the ones which are generated
over $W\left[  \left[  u\right]  \right]  $ by a single element. We refer to
this collection as the class of cyclic Breuil-Kisin modules. Cyclic
Breuil-Kisin modules can be used to classify all Hopf orders in $KC_{p},$ but
no nontrivial orders in $KC_{p^{n}}$ for $n\geq2$ can be found this way. (By
``nontrivial orders'' we mean orders which are not $RC_{p^{n}},$ although in
general a finite $K$-Hopf algebra need not have any $R$-Hopf orders -- see
\cite[20.5]{Childs00}.) However, in the following section we construct all
Hopf orders in $KC_{p^{2}}$ using extensions of cyclic Breuil-Kisin modules.
Finally, we show how it is possible (although at this point perhaps not
practical) to describe Hopf orders in $KC_{p^{n}}$ by picking $n$ elements in
$W_{n}\left(  \left(  u\right)  \right)  $ which satisfy certain properties.

We have chosen to use Breuil-Kisin theory to find Hopf orders in $KC_{p^{n}},$
but it can be used to find Hopf orders in any abelian (commutative,
cocommutative) finite projective $K$-Hopf algebra of $p$-power rank. In fact,
it appears that finding orders in the dual $\left(  KC_{p^{n}}\right)  ^{\ast
}$ may be slightly easier. Thus if $K$ contains a primitive $\left(
p^{n}\right)  ^{\text{th}}$ root of unity $\zeta_{n}$ then we could develop
the theory by considering only $\left(  KC_{p^{n}}\right)  ^{\ast}$ since it
is isomorphic to $KC_{p^{n}}.$ However, we do not insist that $\zeta_{n}\in
K$, in contrast to many of the works cited above.

Breuil-Kisin modules have been used predominantly to study Galois
representations. We hope that the results below will encourage their use in
other Hopf algebra applications. For example, in \cite{Koch01b} and
\cite{Koch05} we classified monogenic Hopf algebras (i.e. Hopf algebras
generated by a single element) over discrete valuation rings with $e=1$ and
$e\leq p-1$ respectively. In \cite{Koch09} we classified monogenic Hopf
algebras regardless of ramification, however we were limited to such Hopf
algebras $H$ where Spec$\left(  H\right)  $ was killed by $p$. This latter
paper used Breuil modules. Monogenic Hopf algebras are important in the study
of Hopf-Galois extensions (see, e.g. \cite{Koch03}). It seems likely that
Breuil-Kisin modules could be used to obtain a more general classification.

Throughout this paper, $R,K,K_{0},E,W,$ $e$, and $p$ are as above. Also, let
$c_{0}=E\left(  0\right)  /p,$ and let $F\left(  u\right)  =\left(  E\left(
u\right)  -u^{e}\right)  /p.$ All group schemes are affine, commutative, flat,
and have order a power of $p$; likewise all Hopf algebras are finite,
projective, abelian, and of $p$-power rank. The author would like to thank
Dajano Tossici for his suggestions on Theorem \ref{p2}.

\section{Breuil-Kisin Modules}

Here we review the basic theory of Breuil-Kisin modules. More details can be
found in \cite{Kisin06} and \cite{Kisin09}.

Let $\mathfrak{S}=W\left[  \left[  u\right]  \right]  $ and $\mathfrak{S}%
_{n}=\mathfrak{S}/p^{n}\mathfrak{S}=W_{n}\left[  \left[  u\right]  \right]  .$
Let $\phi:\mathfrak{S\rightarrow S}$ be the Frobenius-semilinear (hereafter
``semilinear'') map extending the Frobenius on $W$ such that $\phi\left(
u^{i}\right)  =u^{pi}$ for all $i\geq0.$ Clearly $\phi\left(  p^{n}%
\mathfrak{S}\right)  \subset p^{n}\mathfrak{S},$ so we have induced semilinear
maps on $\mathfrak{S}_{n}$ for all $n$ which we also denote by $\phi.$ For an
$\mathfrak{S}$-module $\mathfrak{M}$ we define $\mathfrak{S}\otimes_{\phi
}\mathfrak{M}$ to be $\mathfrak{S}\otimes_{\mathfrak{S}}\mathfrak{M}$, where
$\mathfrak{S}$ is viewed as an $\mathfrak{S}$-module via $\phi.$ Explicitly,
\[
\phi\left(  s\right)  s^{\prime}\otimes_{\phi}m=s^{\prime}\otimes_{\phi}sm
\]
for all $s,s^{\prime}\in\mathfrak{S}$ and $m\in\mathfrak{M}.$ In particular,
$1\otimes_{\phi}um=u^{p}\otimes_{\phi}m$ for $m\in\mathfrak{M}.$ This
$\mathfrak{S}$-module is denoted by both $\mathfrak{S}\otimes_{\phi
,\mathfrak{S}}\mathfrak{M}$ and $\phi^{\ast}\left(  \mathfrak{M}\right)  $ in
\cite{Kisin09}.

We define the category $^{\prime}$(Mod/$\mathfrak{S}$) to be the category of
$\mathfrak{S}$-modules $\mathfrak{M}$ together with a semilinear map
$\phi_{\mathfrak{M}}:\mathfrak{M\rightarrow M}$ such that $E\mathfrak{M}$
annihilates $\mathfrak{M}/\left(  1\otimes_{\mathfrak{S}}\phi_{\mathfrak{M}%
}\right)  \left(  \mathfrak{S\otimes}_{\phi}\mathfrak{M}\right)  .$ Here we
view $\left(  1\otimes_{\mathfrak{S}}\phi_{\mathfrak{M}}\right)  \left(
\mathfrak{S\otimes}_{\phi}\mathfrak{M}\right)  $ as a submodule of
$\mathfrak{M}$ via the canoncial isomorphism $\mathfrak{S\otimes
}_{\mathfrak{S}}\mathfrak{M\rightarrow M}$: note that $\left(  1\otimes
_{\mathfrak{S}}\phi_{\mathfrak{M}}\right)  $ maps $\mathfrak{S\otimes}_{\phi
}\mathfrak{M}$ to $\mathfrak{S\otimes}_{\mathfrak{S}}\mathfrak{M}$. Thus we
require that for all $m\in\mathfrak{M}$ there exist $\left\{  m_{i}\right\}
\subset\mathfrak{M}$ and $\left\{  s_{i}\right\}  \subset\mathfrak{S}$ such
that%
\[
Em=\sum s_{i}\phi_{\mathfrak{M}}\left(  m_{i}\right)  .
\]
As expected, an $\mathfrak{S}$-linear map $\alpha:\mathfrak{M\rightarrow
M}^{\prime}$ is a morphism in this category if $\alpha\phi_{\mathfrak{M}}%
=\phi_{\mathfrak{M}^{\prime}}\alpha.$

From here on we follow convention and write $\phi$ for $\phi_{\mathfrak{M}},$
$\phi_{\mathfrak{M}^{\prime}},$ etc. and hope no confusion will arise. Also,
the Breuil-Kisin module $\left(  \mathfrak{M},\phi\right)  $ is usually
denoted by $\mathfrak{M}.$ Finally, any unadorned tensor will be tensored over
$\mathfrak{S}$ in the usual way:\ we will continue to use $\otimes_{\phi}$
where appropriate.

Define the subcategory (Mod FI/$\mathfrak{S}$) to be the objects in $^{\prime
}$(Mod/$\mathfrak{S}$) which are isomorphic as $\mathfrak{S}$-modules to a
finite sum $\bigoplus\mathfrak{S}/p^{n_{i}}\mathfrak{S}$ for some choice of
$n_{i}$'s. Also, we let (Mod/$\mathfrak{S}$) be the subcategory of $^{\prime}%
$(Mod/$\mathfrak{S}$) consisting of modules with projective dimension 1 (as an
$\mathfrak{S}$-module). Alternatively, (Mod/$\mathfrak{S}$) is the subcategory
consisting of extensions in $^{\prime}$(Mod/$\mathfrak{S}$) of finite free
$\mathfrak{S}/p\mathfrak{S}$-modules \cite[2.3.2]{Kisin06}. Note that any
object in (Mod FI/$\mathfrak{S}$) is necessarily an object of
(Mod/$\mathfrak{S}$). We shall refer to objects in (Mod/$\mathfrak{S}$) as
Breuil-Kisin modules, with the understanding that the objects in (Mod
FI/$\mathfrak{S}$) are also Breuil-Kisin modules. From \cite[2.3.5 and
2.3.6]{Kisin06} we have anti-equivalences between (Mod/$\mathfrak{S}$) and the
category of group schemes over $R;$ and between (Mod FI/$\mathfrak{S}$) and
group schemes over $R$ with the property that the kernel of the map $\left[
p^{i}\right]  $ is also finite flat for all $i$.

The proofs of the above categorical equivalences rely on Breuil modules. We
briefly describe Breuil modules -- see \cite{Breuil00} for a more thorough
treatment, or see the summary in \cite{Kisin09}. Let
\[
S=\left\{  \sum_{i=0}^{\infty}w_{i}\frac{u^{i}}{\left\lfloor i/e\right\rfloor
!}\,|\,w_{i}\in W,\;\lim_{i\rightarrow\infty}w_{i}=0\right\}  \subset
K_{0}\left[  \left[  u\right]  \right]
\]
and let Fil$^{1}S$ be the $p$-adic completion of the ideal generated by
$\left(  E\left(  u\right)  \right)  ^{i}/i!.$ The ring $S$ is equipped with a
semilinear map $\phi$ given by%
\[
\phi\left(  \frac{u^{i}}{\left\lfloor i/e\right\rfloor !}\right)
=\frac{u^{pi}}{\left\lfloor i/e\right\rfloor !}\in pS
\]
and we define $\phi_{1}$ on $S$ to be ``$\phi/p$''. A Breuil module consists
of a triple $\left(  \mathcal{M},\mathcal{M}_{1},\phi_{1}\right)  $ where
$\mathcal{M}$ is a finite free module over $S,\;\mathcal{M}_{1}$ is an
$S$-submodule of $\mathcal{M}$ containing $\left(  \text{Fil}^{1}S\right)
\mathcal{M},$ and $\phi_{1}$ is a semilinear map $\mathcal{M}_{1}%
\mathcal{\rightarrow M}$ whose image generates $\mathcal{M}$ as an $S$-module.
The morphisms are $S$-module maps which commute with the respective $\phi_{1}%
$'s. The category of Breuil modules is denoted $^{\prime}$(Mod/$S$). If,
furthermore, the Breuil module $\mathcal{M}$ is isomorphic as an $S$-module to
a (finite) direct sum of $S/p^{n_{i}}S$ for various $n_{i}$'s then we say
$\mathcal{M}$ is an object in the subcategory (Mod FI/$S$). Also, in a manner
analogous to the Breuil-Kisin categories we let (Mod/$S$) be the full
subcategory of $^{\prime}$(Mod/$S$) which contains the objects in (Mod FI/$S$)
killed by $p$ and is stable by extensions.

Given a Breuil-Kisin module $\mathfrak{M}$ one can obtain a Breuil module as
follows. Let $\mathcal{M}=S\otimes_{\phi}\mathfrak{M},$ where $S$ is viewed as
an $\mathfrak{S}$-module via the map $\mathfrak{S}\rightarrow S,\;u\mapsto u.$
Then define%
\begin{align*}
\mathcal{M}_{1}  &  =\left\{  y\in\mathcal{M\,}|\,\left(  1\otimes\phi\right)
\left(  y\right)  \in\text{Fil}^{1}S\otimes_{\mathfrak{S}}\mathfrak{M}\right\}
\\
\phi_{1}  &  =\left(  \phi_{1}\otimes_{\phi}1\right)  \left(  1\otimes
\phi\right)  :\mathcal{M}_{1}\rightarrow\text{Fil}^{1}S\otimes\mathfrak{M}%
\rightarrow S\otimes_{\phi}\mathfrak{M}%
\end{align*}
Then $\left(  \mathcal{M},\mathcal{M}_{1},\phi_{1}\right)  $ is a Breuil
module, and this assignment is an exact, fully faithful functor
(Mod/$\mathfrak{S}$)$\rightarrow$(Mod/$S$) which restricts to (Mod
FI/$\mathfrak{S}$)$\rightarrow$(Mod FI/$S$). Thus one obtains a functor
Gr:\ (Mod/$\mathfrak{S}$)$\rightarrow$($p$-Gr), where $\left(  p\text{-Gr}%
\right)  $ denotes the category of finite flat group schemes of $p$-power order.

It is a consequence of \cite[2.4.7]{Kisin05} that two objects $\mathfrak{M}$,
$\mathfrak{M}^{\prime}$ in (Mod/$\mathfrak{S}$) correspond to group schemes
with isomorphic generic fibers if and only if $\mathfrak{M}\left[
u^{-1}\right]  \cong\mathfrak{M}^{\prime}\left[  u^{-1}\right]  .$ This
observation is very important for our purposes.

We can translate the anti-equivalences with group schemes to equivalences with
Hopf orders. The category (Mod/$\mathfrak{S}$) corresponds to the category of
$R$-Hopf algebras, and (Mod FI/$\mathfrak{S}$) is equivalent to the category
of Hopf algebras where $H/\left[  p^{i}\right]  H$ is flat for all $i$.
Furthermore, if $\mathfrak{M}\left[  u^{-1}\right]  \cong\mathfrak{M}^{\prime
}\left[  u^{-1}\right]  $ then the Hopf algebras corresponding to
$\mathfrak{M}$ and $\mathfrak{M}^{\prime}$ are generically isomorphic, hence
they are orders in the same $K$-Hopf algebra.

It should be pointed out that there is also a theory of Breuil-Kisin modules
for $p$-divisible groups. In this case $\mathfrak{M}$ is a free $\mathfrak{S}%
$-module, and so we have a collection of objects we could call free
``Breuil-Kisin modules''. We will not use these modules here, however they are
vital in the correspondence involving (Mod/$\mathfrak{S}$): the condition
``projective dimension 1'' means that we can realize $\mathfrak{M}$ as the
cokernel of free Breuil-Kisin modules, and the resulting sequence of modules
corresponds to a smooth resolution of the group scheme associated to
$\mathfrak{M}.$

\section{Cyclic Breuil-Kisin Modules}

The simplest type of objects in (Mod\ FI/$\mathfrak{S}$), and hence the
simplest type in (Mod/$\mathfrak{S}$), are the modules $\mathfrak{M}$ which
are generated over $\mathfrak{S}$ by a single element, say $e_{1}.$ Such a
module will be written $\mathfrak{S}_{n}e_{1}.\;$We will refer to such modules
as \emph{cyclic Breuil-Kisin modules}. In this section we will find all cyclic
Breuil-Kisin modules. These modules all have a simple form, particularly when
$n\neq1.$

Before we begin, we will prove a result on power series which will facilitate
calculations throughout the paper. For any ring $T$ we will denote by
$v:T\left[  \left[  u\right]  \right]  \rightarrow\mathbb{Z}^{+}$ the function
given by%
\[
v\left(  f\right)  =\min\left\{  v\,|\,f\in u^{v}T\left[  \left[  u\right]
\right]  \right\}  ,
\]
which is the usual $u$-adic valuation when $T=k.$ We will extend this to
$T\left(  \left(  u\right)  \right)  \rightarrow\mathbb{Z}$ as well. For $w\in
W$ we will let $w^{\phi}$ denote the Frobenius.

In the case where $n=1$ we get the simpler:

\begin{lemma}
\label{pslemma}Let $f$,$h$ $\in k\left[  \left[  u\right]  \right]  $ be
nonzero polynomials$.$ Let $f_{\ell}$ and $h_{m}$ be the coefficients of the
terms in $f$ and $h$ respectively of lowest degree. Then there exists a $g\in
k\left[  \left[  u\right]  \right]  ,\;g\neq0$ such that
\[
fg=\phi\left(  g\right)  h
\]
if and only if $v\left(  f\right)  \geq v\left(  h\right)  ,\;v\left(
f\right)  \equiv v\left(  h\right)  \,\left(  \operatorname{mod}\,p-1\right)
,\;$and if
\[
f_{\ell}/h_{m}\in\left(  k^{\times}\right)  ^{p-1}.
\]
Furthermore, if $f$ and $h$ are invertible then so is $g$.
\end{lemma}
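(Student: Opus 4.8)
The plan is to factor out the relevant power of $u$ so as to reduce to constructing a \emph{unit} solution, and then to build that solution one coefficient at a time. Throughout I would use the explicit description of $\phi$ on $k[[u]]$: for $g=\sum_i g_i u^i$ one has $\phi(g)=\sum_i g_i^{p}u^{pi}$, so $\phi$ is injective and $v(\phi(g))=p\,v(g)$.

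For the ``only if'' direction, suppose $g\neq 0$ solves $fg=\phi(g)h$. Comparing $u$-adic valuations gives $v(f)+v(g)=p\,v(g)+v(h)$, i.e. $v(f)-v(h)=(p-1)v(g)$; since $v(g)\geq 0$ this forces $v(f)\geq v(h)$ and $v(f)\equiv v(h)\pmod{p-1}$. Because the lowest-degree terms on the two sides then sit in the same degree, equating their coefficients yields $f_\ell g_c=h_m g_c^{p}$, where $g_c\in k^{\times}$ is the lowest nonzero coefficient of $g$; dividing by $g_c$ gives $f_\ell/h_m=g_c^{p-1}\in(k^{\times})^{p-1}$.

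For the ``if'' direction, set $c=(v(f)-v(h))/(p-1)\geq 0$, write $f=u^{v(f)}\tilde f$ and $h=u^{v(h)}\tilde h$ with $\tilde f,\tilde h$ units, and look for $g=u^{c}\gamma$ with $\gamma$ a unit. Cancelling the common power of $u$ (legitimate since $v(f)+c=p c+v(h)$) turns $fg=\phi(g)h$ into $\gamma=r\,\phi(\gamma)$ with $r=\tilde h/\tilde f$ a unit of $k[[u]]$. I would then solve for the coefficients $\gamma_0,\gamma_1,\dots$ in succession. The constant-term equation is $\gamma_0=r_0\gamma_0^{p}$, i.e. $\gamma_0^{p-1}=f_\ell/h_m$, which has a solution $\gamma_0\in k^{\times}$ precisely under the hypothesis $f_\ell/h_m\in(k^{\times})^{p-1}$. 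For $n\geq 1$ the coefficient of $u^{n}$ in $r\,\phi(\gamma)$ is $\sum_{0\le i\le n/p}r_{n-pi}\gamma_i^{p}$, which involves only $\gamma_i$ with $i\le n/p<n$; hence $\gamma_n$ is uniquely determined by the earlier coefficients, with no further constraint. This produces a unit $\gamma$, hence a nonzero $g$, and it also gives the last assertion at once: if $f$ and $h$ are units then $v(f)=v(h)=0$, so $c=0$ and $g=\gamma$ is itself a unit.

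The only place that needs care — and it is really a bookkeeping point rather than a genuine obstacle — is verifying that the recursion in the converse is well-founded: this works because $\phi$ scales $u$-degrees by $p\geq 2$, so the degree-$n$ equation never re-introduces $\gamma_n$ for $n\geq 1$, leaving the constant term as the unique equation with content, which is exactly what the $(p-1)$-st power hypothesis resolves.
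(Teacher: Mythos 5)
Your proof is correct and follows essentially the same route as the paper: necessity by comparing $u$-adic valuations and lowest-degree coefficients, sufficiency by a coefficient-by-coefficient recursion in which only the constant-term equation imposes the $(p-1)$-st power condition. The only cosmetic difference is that you normalize both $f$ and $h$ to units and solve $\gamma=r\,\phi(\gamma)$, whereas the paper reduces only to $v(h)=0$ and runs the same induction directly on $fg=\phi(g)h$; your version in fact makes the role of the hypothesis $f_{\ell}/h_{m}\in\left(k^{\times}\right)^{p-1}$ at the initial step slightly more explicit.
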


\begin{proof}
Since $v\left(  \phi\left(  g\right)  \right)  =pv\left(  g\right)  $ the
conditions on $v\left(  f\right)  $ and $v\left(  h\right)  $ are obviously
necessary. If we write $f=u^{v\left(  f\right)  }f^{\prime}$ and
$h=u^{v\left(  h\right)  }h^{\prime}$ for $f^{\prime},h^{\prime}\in k\left[
\left[  u\right]  \right]  ^{\times}$ we see that this equation is equivalent
to
\[
u^{v\left(  f\right)  -v\left(  h\right)  }f^{\prime}g=\phi\left(  g\right)
h^{\prime}%
\]
and thus we may assume that $v\left(  h\right)  =0.$

Write $f=\sum f_{i}u^{i},\;g=\sum g_{i}u^{i},$ and $h=\sum h_{i}u^{i}$ and let
$j=v\left(  g\right)  .$ Then $v\left(  f\right)  =j\left(  p-1\right)  .\;$
By comparing $u^{pj}$ coefficients we get%
\[
f_{j\left(  p-1\right)  }g_{j}=\left(  g_{j}^{\phi}\right)  h_{0}.
\]
This determines $g_{j}$ since $f_{j\left(  p-1\right)  }$ is invertible. Now
suppose $g_{j},g_{j+1},\dots,g_{j+i}$ have been chosen. If we compare
$u^{pj+i+1}$ coefficients we get%
\[
f_{j\left(  p-1\right)  }g_{j+i+1}+\cdots+f_{j\left(  p-1\right)  +i+1}%
g_{j}=\left(  g_{j}^{\phi}\right)  h_{i+1}+G
\]
where $G$ is an expression involving $h_{i},h_{i-1},\dots,h_{0}$ and
$g_{j},g_{j+1},\dots,g_{j+\varepsilon}$ where $\varepsilon$ is the largest
integer such that $p\left(  j+\varepsilon\right)  <j+i+1.$ Thus, $g_{j+i+1}$
is determined (and in fact is unique for a fixed $g_{j}$), and by induction
$fg=\phi\left(  g\right)  h$ has a solution. That the solution is invertible
follows by considering valuations.
\end{proof}

We are now ready to describe the cyclic Breuil-Kisin modules.

\begin{lemma}
For $n\geq1\ $Breuil-Kisin module structures on $\mathfrak{S}_{n}e_{1}$
correspond to factorizations of $E$ $\operatorname{mod}p^{n}.$
\end{lemma}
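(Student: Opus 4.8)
The plan is to unpack the definition of the category ${}'$(Mod/$\mathfrak{S}$) for the specific module $\mathfrak{M} = \mathfrak{S}_n e_1$. A semilinear map $\phi:\mathfrak{M}\to\mathfrak{M}$ is determined by the single element $\phi(e_1) = g e_1$ for some $g \in \mathfrak{S}_n = W_n[[u]]$, since $\phi(s e_1) = \phi(s) g e_1$. So Breuil-Kisin structures on $\mathfrak{S}_n e_1$ are in bijection with choices of $g \in W_n[[u]]$, and the content is to identify exactly which $g$ satisfy the axiom that $E\mathfrak{M}$ annihilates $\mathfrak{M}/(1\otimes\phi)(\mathfrak{S}\otimes_\phi\mathfrak{M})$.

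Next I would compute that submodule. Here $\mathfrak{S}\otimes_\phi\mathfrak{M}$ is free of rank one over $\mathfrak{S}_n$ on $1\otimes_\phi e_1$, and $(1\otimes\phi)(1\otimes_\phi e_1) = \phi(e_1) = g e_1$, so the image of $(1\otimes\phi)$ inside $\mathfrak{M}$ is the ideal $(g)e_1 \subset \mathfrak{S}_n e_1$. The axiom then says $E \cdot \mathfrak{S}_n e_1 \subseteq (g) e_1$, i.e. $E \in (g)$ in $W_n[[u]]$ — equivalently $g \mid E \bmod p^n$. Conversely any such $g$ gives a valid object. So the correspondence should be: the structure map $\phi(e_1) = g e_1$ is admissible iff $g$ divides $E \bmod p^n$, and writing $E \equiv g g' \pmod{p^n}$ exhibits the "factorization of $E \bmod p^n$." I would also remark that two such $g$'s giving isomorphic Breuil-Kisin modules correspond to the same factorization up to units (an isomorphism $\mathfrak{S}_n e_1 \to \mathfrak{S}_n e_1'$ is multiplication by a unit $w \in W_n[[u]]^\times$, and compatibility with $\phi$ forces $g' = \phi(w) g w^{-1}$, which rescales the factorization by the unit $w$), so that the bijection is really with factorizations up to the obvious equivalence — though the bare statement as written only claims the looser correspondence with factorizations, so I may state it at that level and defer the refinement.

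The main obstacle is bookkeeping rather than anything deep: one must be careful that "$E$" is being read as the Eisenstein polynomial $E(u) \in W[[u]]$ reduced mod $p^n$, that divisibility in $W_n[[u]]$ behaves correctly (it is a local ring when $n=1$, giving $g \sim u^j$, but for $n\geq 2$ it is not a domain, so "factorization" must be interpreted as an honest equation $E \equiv g g'\pmod{p^n}$ rather than via valuations), and that the identification of $(1\otimes\phi)\big(\mathfrak{S}\otimes_\phi\mathfrak{M}\big)$ with the ideal $(g)$ uses the canonical isomorphism $\mathfrak{S}\otimes_{\mathfrak{S}}\mathfrak{M}\cong\mathfrak{M}$ exactly as set up in the text. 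Once these identifications are made the equivalence of the two conditions is immediate, and Lemma~\ref{pslemma} is what will be invoked in the subsequent finer analysis of the $n=1$ case (reducing the factorization to $g = u^j$ with appropriate congruence conditions on $j$) rather than in this lemma itself.
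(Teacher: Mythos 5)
Your proposal is correct and follows essentially the same route as the paper: write the semilinear map as $\phi(e_1)=ge_1$, identify the image of $1\otimes\phi$ with the ideal $(g)e_1$ in the free rank-one module $\mathfrak{S}_n e_1$, and observe that the Breuil-Kisin axiom is exactly $E\equiv sg\pmod{p^n}$ for some $s$. Your extra remarks (the converse direction, the caveat that $W_n[[u]]$ is not a domain for $n\geq 2$, and isomorphism classes corresponding to factorizations up to units) are sound refinements beyond what the paper spells out, but do not change the argument.
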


\begin{proof}
Any semilinear map on $\mathfrak{S}_{n}e_{1}$ is of the form $\phi_{f}\left(
e_{1}\right)  =fe_{1}$ for some $f\in W_{n}\left[  \left[  u\right]  \right]
.$ In order for $\left(  \mathfrak{S}_{n}e_{1},\phi_{f}\right)  $ to be a
Breuil-Kisin module it is necessary and sufficient that $Ee_{1}$ be in the
image of $\left(  1\otimes\phi_{f}\right)  \left(  \mathfrak{S\otimes}%
_{\phi_{f}}\mathfrak{S}_{n}e_{1}\right)  ;$ hence if $\left(  \mathfrak{S}%
e_{1},\phi_{f}\right)  $ is a Breuil-Kisin modules there exists an
$s\in\mathfrak{S}$ such that%
\[
Ee_{1}=\left(  1\otimes\phi_{f}\right)  \left(  s\otimes_{\phi}e_{1}\right)
=s\phi_{f}\left(  e_{1}\right)  =sfe_{1}%
\]
and so $E\equiv sf\,\left(  \operatorname{mod}p^{n}\right)  .$
\end{proof}

We start by looking at cyclic Breuil-Kisin modules where $n=1$. These
correspond to $R$-Hopf algebras of order $p$. One can immediately see the
parallels with the Tate-Oort classification \cite{TateOort70}.

\begin{proposition}
All Breuil-Kisin modules with $\mathfrak{M=S}_{1}e_{1}$ are of the form
$\left(  \mathfrak{S}_{1}e_{1},\phi_{bu^{r}}\right)  $ $\phi_{bu^{r}}$ for
$0\leq r\leq e$ and $b\in k^{\times}.$ Furthermore:

\begin{enumerate}
\item $\left(  \mathfrak{S}_{1}e_{1},\phi_{bu^{r}}\right)  \cong\left(
\mathfrak{S}_{1}e_{1},\phi_{b^{\prime}u^{r^{\prime}}}\right)  $ if and only if
$r=r^{\prime}$ and $b/b^{\prime}\in\left(  k^{\times}\right)  ^{p-1}.$

\item The\ Hopf algebra associated to $\left(  \mathfrak{S}_{1}e_{1}%
,\phi_{c_{0}^{-1}u^{e}}\right)  $ is $RC_{p}$ and the Hopf algebra associated
to $\left(  \mathfrak{S}_{1}e_{1},\phi_{1}\right)  $ is $\left(
RC_{p}\right)  ^{\ast}.$

\item The Hopf algebra associated to $\left(  \mathfrak{S}_{1}e_{1}%
,\phi_{bu^{r}}\right)  $ is an order in $KC_{p}$ if and only if $bc_{0}%
\in\left(  k^{\times}\right)  ^{p-1}$and $r\equiv e\,\left(
\operatorname{mod}\,p-1\right)  .$
\end{enumerate}
\end{proposition}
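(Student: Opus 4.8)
The plan is to combine the two preceding lemmas with Lemma \ref{pslemma}. By the lemma identifying Breuil-Kisin structures on $\mathfrak{S}_1e_1$ with factorizations of $E\bmod p$, and since $E(u)\equiv u^e\pmod p$ (as $E$ is Eisenstein, $E(u)-u^e=pF(u)$), every such structure is $\phi_f$ with $f\mid u^e$ in $k[[u]]$; the factors of $u^e$ up to units are exactly $bu^r$ with $0\le r\le e$ and $b\in k^\times$. This gives the initial assertion. I should note that $\mathfrak S_1 e_1$ with $f=0$ is not a Breuil-Kisin module (since $E\ne 0\bmod p$ cannot be a multiple of $0$), so we really do get $b\in k^\times$, not $b\in k$.

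For part (1), an isomorphism $\alpha:\mathfrak{S}_1e_1\to\mathfrak{S}_1e_1$ is multiplication by some $g\in k[[u]]^\times$ (it must be a unit to be an isomorphism of $\mathfrak S_1$-modules), and the compatibility $\alpha\phi_{bu^r}=\phi_{b'u^{r'}}\alpha$ unwinds to $g\cdot bu^r = b'u^{r'}\cdot\phi(g)$ after tracking the semilinear tensor identity $1\otimes_\phi ug = u^p\otimes_\phi g$; that is, $(bu^r)g=\phi(g)(b'u^{r'})$. Now invoke Lemma \ref{pslemma} with $f=bu^r$, $h=b'u^{r'}$: a nonzero (hence invertible) $g$ exists iff $r\ge r'$, $r\equiv r'\pmod{p-1}$, and $b/b'\in(k^\times)^{p-1}$. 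Applying the same criterion to $g^{-1}$ (the inverse isomorphism) forces $r'\ge r$ as well, so $r=r'$, and then the condition collapses to $b/b'\in(k^\times)^{p-1}$, as claimed.

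For parts (2) and (3) I would pass through Breuil modules via the functor $\mathcal M=S\otimes_\phi\mathfrak M$ described in Section 1, and then use the known Breuil-module description of $RC_p$, $(RC_p)^\ast$, and of orders in $KC_p$ — in particular the criterion from \cite{Koch07a} recalled in the introduction, that $\mathcal M$ gives an order in $KC_p$ iff $\mathcal M_1$ contains an $x$ with $\phi_1(x)=x$. Concretely: $\mathcal M = Se_1$ with $\mathcal M_1=\{y\,|\,(1\otimes\phi)(y)\in \mathrm{Fil}^1 S\otimes\mathfrak M\}$; since $1\otimes\phi$ sends $y e_1\otimes_\phi$-style generators to $y\cdot bu^r\cdot e_1$, the condition $(bu^r)y\in\mathrm{Fil}^1 S = (E(u))$-generated ideal forces the $u$-adic/filtration bookkeeping, and $\phi_1$ acts by $\phi_1$ applied coordinatewise composed with this. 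Checking $\phi_{c_0^{-1}u^e}$ gives $RC_p$: here $bu^r=c_0^{-1}u^e$ and $bc_0=c_0^{-1}c_0\cdot$(unit adjustments)$\,=1\in(k^\times)^{p-1}$, with $r=e\equiv e$, consistent with (3); and one verifies directly that the associated Breuil module is the one attached to $RC_p$ in the Tate-Oort/Breuil dictionary. The case $\phi_1$ (i.e. $r=0$, $b=1$) gives $(RC_p)^\ast$ symmetrically. For (3) in general, by the Laurent-series criterion $\mathfrak M[u^{-1}]\cong\mathfrak M'[u^{-1}]$ controls generic isomorphism, and over $k((u))$ one applies Lemma \ref{pslemma} (now with $v(h)$ allowed nonzero, equivalently allowing $g\in k((u))^\times$) to see that $(\mathfrak S_1e_1,\phi_{bu^r})$ becomes isomorphic to $(\mathfrak S_1e_1,\phi_{c_0^{-1}u^e})$ over $k((u))$ iff $r\equiv e\pmod{p-1}$ and $b/c_0^{-1}=bc_0\in(k^\times)^{p-1}$; combining with (2) this says precisely that the Hopf algebra is an order in $KC_p$.

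The main obstacle I anticipate is part (3): one must be careful that "becomes isomorphic over $k((u))$" is exactly detected by Lemma \ref{pslemma} with the valuation hypothesis relaxed, and that this Laurent-series isomorphism condition faithfully translates (via the anti-equivalence and the $u^{-1}$-inverted criterion quoted from \cite{Kisin05}) to generic isomorphism of the Hopf algebras — so that "order in $KC_p$" becomes "generically isomorphic to the module giving $RC_p$". The congruence $r\equiv e\pmod{p-1}$ and the class condition $bc_0\in(k^\times)^{p-1}$ should then drop out directly from the trichotomy in Lemma \ref{pslemma}, so once the translation is set up the computation is short.
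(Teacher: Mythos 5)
Your route is essentially the paper's: Lemma \ref{pslemma} does the work for the normal form and part (1), part (2) is handled by computing the associated Breuil module and quoting the known dictionary, and part (3) is the generic-fiber (invert $u$) criterion plus a valuation count reducing to the solvability of $bg'=\phi(g')c_0^{-1}$, which is exactly how the paper argues. Two points need tightening. First, your justification of the opening normal form is not quite right as stated: a Breuil-Kisin structure is $\phi_f$ with $f=u^rh$, $h\in k[[u]]^{\times}$ an arbitrary unit power series, and ``up to units'' these factors are just $u^r$, not $bu^r$; moreover multiplying $f$ by a unit changes the module structure, so one cannot simply absorb $h$. What is actually true, and what the paper proves, is that $\left(\mathfrak{S}_1e_1,\phi_{u^rh}\right)\cong\left(\mathfrak{S}_1e_1,\phi_{h(0)u^r}\right)$ via $e_1\mapsto ge_1$ where $g$ solves $hg=h(0)\phi(g)$ — an application of Lemma \ref{pslemma} of the same kind you use in part (1). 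Note that $h$ can only be normalized to its constant term (and then only modulo $(k^\times)^{p-1}$), not to an arbitrary constant, which is what makes part (1) nontrivial. Relatedly, your parenthetical ``nonzero (hence invertible) $g$'' is not true in general; invertibility comes from the final clause of the lemma after stripping off $u^r$ (or, as in the paper, from the valuation identity $r+v(g)=r'+pv(g)$ with $v(g)=0$).

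Second, part (2) in your write-up is only a pointer (``one verifies directly \dots in the dictionary''). The content the paper supplies is the explicit computation $\mathcal{M}_1=u^{e-r}S_1\otimes_{\phi}e_1$ and $\phi_1\left(u^{e-r}\otimes_{\phi}e_1\right)=(bc_0)^p\otimes_{\phi}e_1$, identifying the Breuil module as $\mathcal{M}\left(e-r,(bc_0)^p\right)$ and then citing \cite[5.2.1]{BreuilConradDiamondTaylor01} for $RC_p$ (case $r=e$, $b=c_0^{-1}$) and $(RC_p)^{\ast}$ (case $r=0$, $b=1$); you would need to carry out this short computation rather than gesture at it, since (3) relies on (2) to know that $\phi_{c_0^{-1}u^e}$ is the module of $RC_p$. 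With those repairs your argument coincides with the paper's; the translation you flag as the main risk in (3) — ``order in $KC_p$'' equals ``isomorphic to $\left(\mathfrak{S}_1e_1[u^{-1}],\phi_{c_0^{-1}u^e}\right)$ after inverting $u$'' — is indeed exactly how the paper proceeds, with the inequality in Lemma \ref{pslemma} disappearing because $v(g)$ may be negative over $k((u))$.
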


\begin{proof}
Let $\left(  \mathfrak{S}_{1}e_{1},\phi_{f}\right)  $ be a Breuil-Kisin
module. Write $f=u^{r}h,$ where $h\in k\left[  \left[  u\right]  \right]
^{\times}.$ As $f$ is a factor of $E$ it is clear that $r\leq e.$ Let $g\in
k\left[  \left[  u\right]  \right]  ^{\times}$ be a solution to $hg=b\phi
\left(  g\right)  $ where $b=h\left(  0\right)  \;$-- by Lemma \ref{pslemma}
such a $g$ exists. The map $\alpha:\left(  \mathfrak{S}_{1}e_{1},\phi_{u^{r}%
h}\right)  \rightarrow\left(  \mathfrak{S}_{1},e_{1},\phi_{bu^{r}}\right)  $
given by $\alpha\left(  e_{1}\right)  =ge_{1}$ satisfies%
\begin{align*}
\alpha\left(  \phi_{u^{r}h}\left(  e_{1}\right)  \right)   &  =\alpha\left(
u^{r}he_{1}\right)  =u^{r}ghe_{1}=u^{r}b\phi\left(  g\right)  e_{1}\\
&  =\phi\left(  g\right)  \phi_{bu^{r}}\left(  e_{1}\right)  =\phi_{bu^{r}%
}\left(  ge_{1}\right)  =\phi_{bu^{r}}\left(  \alpha\left(  e_{1}\right)
\right)
\end{align*}
and as $g\in k\left[  \left[  u\right]  \right]  ^{\times}$ we have an
isomorphism. Now suppose $\beta:\left(  \mathfrak{S}_{1}e_{1},\phi_{bu^{r}%
}\right)  \cong\left(  \mathfrak{S}_{1}e_{1},\phi_{b^{\prime}u^{r^{\prime}}%
}\right)  $ is an isomorphism, say $\beta\left(  e_{1}\right)  =ge_{1},\;g\in
k\left[  \left[  u\right]  \right]  ^{\times}$ (different from the $g$ above).
Then
\begin{align*}
\beta\phi_{bu^{r}}\left(  e_{1}\right)   &  =\beta\left(  bu^{r}e_{1}\right)
=bu^{r}ge_{1}\\
\phi_{b^{\prime}u^{r^{\prime}}}\beta\left(  e_{1}\right)   &  =\phi
_{b^{\prime}u^{r^{\prime}}}\left(  ge\right)  =\phi\left(  g\right)
b^{\prime}u^{r^{\prime}}e_{1}.
\end{align*}
Since $\beta$ commutes with the respective $\phi$'s we have $bu^{r}%
g=\phi\left(  g\right)  b^{\prime}u^{r^{\prime}}.$ As we must have equal
valuations, and since $v\left(  g\right)  =v\left(  \phi\left(  g\right)
\right)  =1,$ we have $r=r^{\prime}.$ Thus this equation reduces to
$bg=\phi\left(  g\right)  b^{\prime},$ which has a solution if and only if
$b/b^{\prime}\in\left(  k^{\times}\right)  ^{p-1}.$ This proves \textbf{1}.

To prove \textbf{2} we will find the corresponding Breuil module for $\left(
\mathfrak{S}_{1}e_{1},\phi_{bu^{r}}\right)  .$ Recall that $\mathcal{M}%
=S\otimes_{\phi}\mathfrak{S}_{1}e_{1}.$ As $pe_{1}=0$ and $\phi\left(
p\right)  =p$ we have $p\otimes_{\phi}e_{1}=0$ so we may replace $S$ with
$S_{1}:=S/pS,$ which we will identify with $k\left[  u\right]  /\left(
u^{pe}\right)  ,$ via the isomorphism in \cite[2.1.2.1]{Breuil00}, where this
ring is denoted $\tilde{S}_{1}$. Note that under this isomorphism $\phi
_{1}\left(  u^{e}\right)  =\phi_{1}\left(  E\right)  =c_{0}^{p}\in S_{1}.$
Thus $\mathcal{M}=\left\{  s\otimes_{\phi}e_{1}\,|\,s\in S_{1}\right\}  $ and
we have%
\[
\mathcal{M}_{1}=\left\{  s\otimes_{\phi}e_{1}\,|\,\left(  1\otimes\phi\right)
\left(  s\otimes_{\phi}e_{1}\right)  \in u^{e}S_{1}\otimes\mathfrak{S}%
e_{1}\right\}  .
\]
As $\left(  1\otimes\phi\right)  \left(  s\otimes_{\phi}e_{1}\right)
=su^{r}e_{1}$ we see that $\left(  1\otimes\phi\right)  \left(  s\otimes
_{\phi}e_{1}\right)  \in u^{e}S_{1}\otimes\mathfrak{S}e_{1}$ if and only if
$s\in u^{e-r}S_{1},$ hence
\[
\mathcal{M}=u^{e-r}S_{1}\otimes_{\phi}\mathfrak{S}e_{1}.
\]
Finally, we compute $\phi_{1}$:%
\begin{align*}
\phi_{1}\left(  u^{e-r}\otimes_{\phi}e_{1}\right)   &  =\left(  \phi
_{1}\otimes_{\phi}1\right)  \left(  1\otimes\phi\right)  \left(
u^{e-r}\otimes_{\phi}e_{1}\right) \\
&  =\left(  \phi_{1}\otimes_{\phi}1\right)  \left(  u^{e-r}\otimes bu^{r}%
e_{1}\right) \\
&  =\left(  \phi_{1}\otimes_{\phi}1\right)  \left(  bu^{e}\otimes e_{1}\right)
\\
&  =b^{p}c_{0}^{p}\otimes_{\phi}e_{1}.
\end{align*}
This is the Breuil module denoted $\mathcal{M}\left(  e-r,\left(
bc_{0}\right)  ^{p}\right)  $ in \cite{BreuilConradDiamondTaylor01}. If
$b=c_{0}^{-1}$ and $r=e$ we get $\mathcal{M}\left(  0,1\right)  ,$ whose
corresponding Hopf algebra is $RC_{p}$; if $b=1$ and $r=0$ we get
$\mathcal{M}\left(  e,c_{0}^{-p}\right)  $ which gives $\left(  RC_{p}\right)
^{\ast}$ \cite[5.2.1]{BreuilConradDiamondTaylor01}.

Finally, if $\left(  \mathfrak{M=S}_{1}e_{1},\phi_{bu^{r}}\right)  $
corresponds to a Hopf order in $KC_{p}$ then $\mathfrak{M}\left[
u^{-1}\right]  $ is isomorphic to $\left(  \mathfrak{S}_{1}e_{1}\left[
u^{-1}\right]  ,\phi_{c_{0}^{-1}u^{e}}\right)  .$ Suppose $\gamma
:\mathfrak{M}\left[  u^{-1}\right]  \rightarrow\left(  \mathfrak{S}_{1}%
e_{1}\left[  u^{-1}\right]  ,\phi_{c_{0}^{-1}u^{e}}\right)  $ is such an
isomorphism. Then, as $\mathfrak{S}_{1}$-modules, each is isomorphic to the
ring of Laurent series $k\left(  \left(  u\right)  \right)  .\;$We have
$\gamma\left(  e_{1}\right)  =ge_{1}$ for some $g_{1}\in k\left(  \left(
u\right)  \right)  ^{\times}$ and $\gamma\left(  \phi_{bu^{r}}\left(
e_{1}\right)  \right)  =\phi_{c_{0}^{-1}u^{e}}\left(  \gamma\left(
e_{1}\right)  \right)  .$ Thus,%
\[
bu^{r}ge_{1}=\phi\left(  g\right)  c_{0}^{-1}u^{e}e_{1},
\]
and since $v\left(  \phi\left(  g\right)  \right)  =pv\left(  g\right)  $ we
have%
\[
r+v\left(  g\right)  =pv\left(  g\right)  +e
\]
which implies $r-e=\left(  p-1\right)  v\left(  g\right)  $ so $\left(
p-1\right)  $ divides $e-r.$ Writing $g=u^{v}g^{\prime},\;g^{\prime}\in
k\left[  \left[  u\right]  \right]  ^{\times}$ gives
\[
bu^{r+v}g^{\prime}=u^{e+pv}\phi\left(  g^{\prime}\right)  c_{0}^{-1}%
\]
and since $r+v=e+pv$ (note that $v<0$) we get%
\[
bg^{\prime}=\phi\left(  g^{\prime}\right)  c_{0}^{-1},
\]
which has a solution if and only if $bc_{0}\in\left(  k^{\times}\right)
^{p-1}.$
\end{proof}

\begin{remark}
More generally, one can show that $\left(  \mathfrak{S}_{1}e_{1},\phi_{bu^{r}%
}\right)  $ and $\left(  \mathfrak{S}_{1}e_{1},\phi_{b^{\prime}u^{r^{\prime}}%
}\right)  $ correspond to generically isomorphic Hopf algebras if and only if
$r\equiv r^{\prime}\,\left(  \operatorname{mod}\,p-1\right)  $ and
$b/b^{\prime}\in k^{\times}.$ Also, the first Hopf algebra is contained in the
second if and only if $r\geq r^{\prime}.$
\end{remark}

Of course, if $\left(  \mathfrak{S}_{1}e_{1},\phi_{bu^{r}}\right)  $
corresponds to a Hopf order in $KC_{p}$ we may replace $b$ with $c_{0}^{-1}$
since they give isomorphic Breuil-Kisin modules. Thus:

\begin{corollary}
The Hopf orders in $KC_{p}$ correspond to Breuil-Kisin modules of the form
$\left(  \mathfrak{S}_{1}e_{1},\phi_{c_{0}^{-1}u^{r}}\right)  $ where $r\equiv
e\,\left(  \operatorname{mod}p-1\right)  .$ If we let $j=\left(  e-r\right)
/\left(  p-1\right)  ,$ one can realize this Hopf algebra as the Larson order
\[
R\left[  \frac{\sigma-1}{\pi^{j}}\right]  \subset K\left\langle \sigma
\right\rangle
\]
where $\sigma$ is a generator of $C_{p}.$ (See \cite{Larson76} for a
description of Larson orders.)
\end{corollary}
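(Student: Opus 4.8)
My plan is to first pin down the stated bijective correspondence using the Proposition and the Remark, and then identify the resulting Hopf algebra with the Larson order by matching two chains of Hopf orders level by level. To set up the correspondence, let $H$ be any Hopf order in $KC_p$. Since $KC_p$ has $K$-dimension $p$, $H$ is projective of rank $p$ over $R$, so the finite flat group scheme attached to $H$ has order $p$; a group scheme of order $p$ is killed by $[p]$, hence it lies in the essential image of (Mod FI$/\mathfrak{S}$), and the associated object $\mathfrak{M}\cong\bigoplus_i\mathfrak{S}/p^{n_i}\mathfrak{S}$ has total length $1$ (the order of the scheme being $p$ raised to the total length), i.e. $\mathfrak{M}\cong\mathfrak{S}_1 e_1$ is cyclic with $n=1$. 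By the Proposition $\mathfrak{M}\cong(\mathfrak{S}_1 e_1,\phi_{bu^r})$ for some $0\le r\le e$ and $b\in k^\times$, and part \textbf{3} of the Proposition says this is an order in $KC_p$ exactly when $bc_0\in(k^\times)^{p-1}$ and $r\equiv e\,(\operatorname{mod}\,p-1)$. When these hold, $b/c_0^{-1}=bc_0\in(k^\times)^{p-1}$, so part \textbf{1} gives $(\mathfrak{S}_1 e_1,\phi_{bu^r})\cong(\mathfrak{S}_1 e_1,\phi_{c_0^{-1}u^r})$, and part \textbf{1} also shows distinct admissible $r$ give non-isomorphic modules. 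Finally $0\le r\le e$ with $r\equiv e\,(\operatorname{mod}\,p-1)$ forces $j=(e-r)/(p-1)$ to be an integer with $0\le j\le\lfloor e/(p-1)\rfloor$. This establishes the bijection.

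For the Larson-order statement I would match chains. By the Remark, $(\mathfrak{S}_1 e_1,\phi_{c_0^{-1}u^r})\subseteq(\mathfrak{S}_1 e_1,\phi_{c_0^{-1}u^{r'}})$ iff $r\ge r'$, so the Hopf orders found above form a chain under inclusion whose smallest member, at $r=e$, is $RC_p$ by part \textbf{2} of the Proposition. On the Larson side, for $0\le j\le e/(p-1)$ the module $H_j=R[(\sigma-1)/\pi^j]$ is a Hopf order in $KC_p$: writing $\sigma=1+\pi^j t$ one checks that $\Delta$ and the antipode preserve $R[t]$ and that $\sigma^p=1$ forces $t$ to be integral of degree $p$ precisely when $e\ge(p-1)j$ (the binding coefficient being $\binom{p}{1}\pi^{(1-p)j}$), and these $H_j$ form a strictly increasing chain $H_0=R[\sigma-1]=RC_p\subsetneq H_1\subsetneq\cdots\subsetneq H_{\lfloor e/(p-1)\rfloor}$. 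By the first part this Larson chain sits inside the chain of all Hopf orders in $KC_p$; both are finite chains of the same length $\lfloor e/(p-1)\rfloor+1$ with bottom element $RC_p$, so they coincide, and comparing their $k$-th members gives $H_k=(\mathfrak{S}_1 e_1,\phi_{c_0^{-1}u^{e-k(p-1)}})$. Taking $k=j=(e-r)/(p-1)$ is exactly the assertion.

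The delicate point is the bookkeeping that makes the two chains coincide: one must know that order-$p$ group schemes correspond to length-$1$ (hence cyclic, $n=1$) Breuil-Kisin modules — which uses that they are killed by $[p]$ — and that the Larson chain really has length $\lfloor e/(p-1)\rfloor+1$. A more computational alternative avoids the chain argument altogether: the proof of the Proposition already identifies the Breuil module of $(\mathfrak{S}_1 e_1,\phi_{c_0^{-1}u^r})$ with $\mathcal{M}(e-r,1)$ in the notation of \cite{BreuilConradDiamondTaylor01}, so one could instead compute the Breuil module attached to $H_j$ and compare the two; this is more explicit but requires carefully tracking the Cartier-duality conventions, which is where the real care is needed.
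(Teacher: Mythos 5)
Your argument is correct in substance, and its two halves relate to the paper differently. The first half is essentially the paper's own route: the paper obtains the corollary by combining part \textbf{3} of the Proposition with part \textbf{1} to normalize $b$ to $c_{0}^{-1}$, exactly as you do; your added observation that an order-$p$ group scheme is killed by $[p]$, lies in (Mod FI/$\mathfrak{S}$), and therefore has module $\mathfrak{S}_{1}e_{1}$ (order $p^{\sum n_{i}}$ forcing a single summand with $n_{1}=1$) makes explicit the completeness step that the paper only asserts when it says cyclic modules classify all Hopf orders in $KC_{p}$. The second half is where you genuinely diverge: the paper does not prove the Larson identification at all, it simply asserts it with a reference to \cite{Larson76}, and to the extent the paper identifies specific Hopf algebras it does so by computing the associated Breuil module and matching it with $\mathcal{M}\left(  e-r,\left(  bc_{0}\right)  ^{p}\right)$ in \cite{BreuilConradDiamondTaylor01} (part \textbf{2} of the Proposition) --- which is precisely the ``computational alternative'' you mention. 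Your chain-matching argument instead uses only the Remark (containment iff $r\geq r^{\prime}$), the count $\left\lfloor e/\left(  p-1\right)  \right\rfloor +1$ from the first half, and the elementary Larson integrality computation, so it buys a self-contained proof that avoids Cartier-duality bookkeeping. One step should be made explicit, though it is easily repaired: ``two finite chains of the same length with the same bottom element'' only forces coincidence once you know the $\left\lfloor e/\left(  p-1\right)  \right\rfloor +1$ Larson orders represent pairwise distinct isomorphism classes (equivalently, that strictly nested Hopf orders cannot share the same parameter $r$); this follows, for instance, from the discriminant or module index, since a proper inclusion of $R$-orders of full rank strictly changes the discriminant, so $H_{i}\subsetneq H_{j}$ can never be abstractly isomorphic. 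With that lemma in place, the Remark gives that $j\mapsto r_{j}$ is strictly decreasing with $r_{0}=e$, which pins down $r_{j}=e-j\left(  p-1\right)$ as you claim.
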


We now turn our attention to the case where $n\geq2.$ We will see that the
cyclic Breuil-Kisin modules fail to give many Hopf orders.

\begin{proposition}
Suppose $n\geq2.$ Then the semilinear maps on $\mathfrak{S}_{n}e_{1}$ which
give a Breuil-Kisin module structure are of the form $\phi_{b}$ or $\phi
_{bE},$ where $b$ is an invertible element in$\;W_{n}.$ Furthermore:

\begin{enumerate}
\item Gr$\left(  \left(  \mathfrak{S}_{n}e_{1},\phi_{b}\right)  \right)  $ is
of multiplicative type and Gr$\left(  \left(  \mathfrak{S}_{n}e_{1},\phi
_{bE}\right)  \right)  $ is \'{e}tale.

\item We have $\left(  \mathfrak{S}_{n}e_{1},\phi_{b}\right)  \cong\left(
\mathfrak{S}_{n}e_{1},\phi_{b^{\prime}}\right)  $ (resp. $\left(
\mathfrak{S}_{n}e_{1},\phi_{bE}\right)  \cong\left(  \mathfrak{S}_{n}%
e_{1},\phi_{b^{\prime}E}\right)  $) if and only if $b/b^{\prime}\in$ $\left(
W_{n}^{\times}\right)  ^{p-1}.$
\end{enumerate}
\end{proposition}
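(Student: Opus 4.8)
The plan is to reduce the problem of describing semilinear structures on $\mathfrak{S}_{n}e_{1}$ to a factorization problem, and then to show that for $n\geq 2$ only the obvious factorizations occur. By the lemma above, $\phi_{f}$ makes $\mathfrak{S}_{n}e_{1}$ a Breuil-Kisin module exactly when $E\equiv sf\pmod{p^{n}}$ for some $s\in W_{n}[[u]]$. The key claim is that then $s$ or $f$ is a unit in $W_{n}[[u]]$. Reducing mod $p$ and using that $E$ is Eisenstein, so $\bar E=u^{e}$ in the discrete valuation ring $k[[u]]$, we get $\bar f=u^{i}h$ for a unit $h\in k[[u]]^{\times}$ and some $i$ with $0\leq i\leq e$. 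If $1\leq i\leq e-1$, then both $s(0)$ and $f(0)$ lie in $pW_{n}$; writing $s(0)=p\sigma$, $f(0)=p\tau$ and evaluating the identity $E=sf$ at $u=0$ gives $pc_{0}\equiv p^{2}\sigma\tau\pmod{p^{n}}$, hence $c_{0}\equiv p\sigma\tau\pmod{p^{n-1}}$, and since $n\geq 2$ this forces $c_{0}\equiv 0\pmod{p}$, contradicting that $c_{0}=E(0)/p$ is a unit. So $i=0$ (and $s$ is a unit) or $i=e$ (and $f$ is a unit).

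Next I would normalize each structure. If $f$ is a unit, set $b=f(0)\in W_{n}^{\times}$ and solve $fg=b\,\phi(g)$ for a unit $g\in W_{n}[[u]]$ by successive approximation on the $u$-adic coefficients: each coefficient $g_{m}$ is determined by lower-order ones because $\phi$ multiplies $u$-degrees by $p$ and $b$ is invertible, a direct analogue of Lemma \ref{pslemma} over $W_{n}$. Then $e_{1}\mapsto ge_{1}$ gives an isomorphism $(\mathfrak{S}_{n}e_{1},\phi_{f})\cong(\mathfrak{S}_{n}e_{1},\phi_{b})$. If instead $s$ is a unit, then $f=s^{-1}E$; this makes sense because $\bar E=u^{e}\neq 0$, so $E$ and $p$ are coprime in the unique factorization domain $W[[u]]$ and hence $E$ is a non-zero-divisor modulo $p^{n}$. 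Cancelling the factor $E$ then reduces $\phi_{f}$ to $\phi_{bE}$ with $b=s(0)^{-1}\in W_{n}^{\times}$ by the same argument.

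For part 1, compute the associated Breuil module $\mathcal{M}=S\otimes_{\phi}\mathfrak{S}_{n}e_{1}$ exactly as in the proof of the previous Proposition. For $\phi=\phi_{b}$ with $b$ a unit, $(1\otimes\phi)(s\otimes_{\phi}e_{1})=sb\otimes e_{1}$, so $s\otimes_{\phi}e_{1}\in\mathcal{M}_{1}$ iff $s\in\text{Fil}^{1}S$; thus $\mathcal{M}_{1}=(\text{Fil}^{1}S)\mathcal{M}$, the situation in which the associated group scheme is of multiplicative type. For $\phi=\phi_{bE}$, since $E\in\text{Fil}^{1}S$ we get $(1\otimes\phi)(s\otimes_{\phi}e_{1})=sbE\otimes e_{1}\in\text{Fil}^{1}S\otimes\mathfrak{M}$ for all $s$, so $\mathcal{M}_{1}=\mathcal{M}$, the situation in which the group scheme is étale. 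Both are consistent with the $n=1$ identifications of $RC_{p}$ and $(RC_{p})^{\ast}$ above.

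Finally, for part 2, any isomorphism $(\mathfrak{S}_{n}e_{1},\phi_{b})\to(\mathfrak{S}_{n}e_{1},\phi_{b'})$ has the form $e_{1}\mapsto ge_{1}$ with $g\in W_{n}[[u]]^{\times}$, and $\phi$-compatibility is equivalent to $bg=b'\phi(g)$. Comparing $u$-adic coefficients — again because $\phi$ raises $u$-degree by $p$ and $b$ is a unit — forces $g$ to be a constant unit $w\in W_{n}^{\times}$, so an isomorphism exists precisely when $b/b'=w^{\phi}/w$ for some $w\in W_{n}^{\times}$; for $\phi_{bE}$ the analysis is the same after cancelling $E$. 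The conceptual heart of the proof is the structural claim of the first paragraph — that modulo $p^{n}$, for $n\geq 2$, the Eisenstein polynomial admits no nontrivial factorization — which is exactly where the ramification hypothesis and the restriction $n\geq 2$ (as opposed to $n=1$, where every $r$ with $0\leq r\leq e$ occurs) enter; the remaining technical point to pin down is the identification of the subgroup $\{w^{\phi}/w:w\in W_{n}^{\times}\}$ of $W_{n}^{\times}$ with $(W_{n}^{\times})^{p-1}$, which is immediate for $n=1$ since there $w^{\phi}=w^{p}$.
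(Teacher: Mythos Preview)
Your approach matches the paper's closely. You supply an explicit argument (evaluation at $u=0$) for the fact the paper states without proof, that $E$ admits no nontrivial factorization $\bmod\, p^{n}$ for $n\ge 2$; the normalization step and the analysis in part~2 are essentially identical, and you correctly land at the same condition the paper's proof reaches, namely $b/b'=h_0^{\phi}/h_0$ for some $h_0\in W_n^{\times}$. Your caution about identifying this image with $(W_n^{\times})^{p-1}$ is well placed: the paper's proof does not bridge this either, and indeed for $k=\mathbb{F}_p$ (where $\phi$ is the identity on $W_n$) the image is trivial while $((\mathbb{Z}/p^n\mathbb{Z})^{\times})^{p-1}$ has order $p^{n-1}$ for odd $p$, so the two subgroups genuinely differ for $n\ge 2$.

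The one place your reasoning slips is part~1. The paper does not compute the Breuil module here; it simply cites \cite[1.1.15]{Kisin09}. Your computation of $\mathcal{M}_1$ is correct, but your labels are reversed: $\mathcal{M}_1=\mathcal{M}$ is the multiplicative-type case and $\mathcal{M}_1=(\mathrm{Fil}^1 S)\mathcal{M}$ the \'etale one, as you can check against the $n=1$ Proposition, where $\phi_{c_0^{-1}u^e}$ (so $r=e$, hence $\mathcal{M}_1=\mathcal{M}$) yields $RC_p$, whose spectrum is $\mu_p$. In fact the Corollary immediately following the present Proposition asserts $\mathrm{Gr}(\mathfrak{S}_n e_1,\phi_{c_0^{-1}E})\cong\mu_{p^n}$, which is of multiplicative type, so part~1 as printed already has the two labels interchanged; your reversed argument therefore reproduces the printed statement but not the correct one.
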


\begin{proof}
As $E$ is irreducible $\operatorname{mod}\,p^{n}$, if we write $E=fs$ then
either $v\left(  f\right)  =0$ or $e$. In the first case, the map
$e_{1}\mapsto ge_{1},$ where $g\in W_{n}\left[  \left[  u\right]  \right]  $
is chosen so that $fg=\phi\left(  g\right)  b,\;$where $b=f\left(  0\right)
\in W_{n}^{\times},$ establishes an isomorphism $\left(  \mathfrak{S}_{n}%
e_{1},\phi_{f}\right)  \rightarrow\left(  \mathfrak{S}_{n}e_{1},\phi
_{b}\right)  .$ This can be shown by setting the constant term of $g$ equal to
1 and proceeding inductively as in the proof of Lemma \ref{pslemma}. If
$v\left(  f\right)  =e$ then $v\left(  s\right)  =0$ and we have $f=s^{-1}E.$
In this case choose $g\in W_{n}\left[  \left[  u\right]  \right]  $ such that
$gs^{-1}=b\phi\left(  g\right)  $ where $b=s^{-1}\left(  0\right)  \in
W_{n}^{\times}.$ This gives an isomorphism $\left(  \mathfrak{S}_{n}e_{1}%
,\phi_{f}\right)  \rightarrow\left(  \mathfrak{S}_{n}e_{1},\phi_{bE}\right)  .$

Statement \textbf{1} follows from \cite[1.1.15]{Kisin09}. For \textbf{2}, let
$\alpha:\left(  \mathfrak{S}_{n}e_{1},\phi_{b}\right)  \rightarrow\left(
\mathfrak{S}_{n}e_{1},\phi_{b^{\prime}}\right)  $ be the isomorphism given by
$e_{1}\mapsto he_{1}.$ Then $bhe_{1}=\phi\left(  h\right)  b^{\prime}e_{1},$
and as $b,b^{\prime}$ are invertible equality holds if and only if
$b/b^{\prime}=h_{0}^{\phi}/h_{0}$ where $h_{0}$ is the constant term of $h$. A
similar argument holds for the modules corresponding to the \'{e}tale groups.
\end{proof}

\begin{corollary}
Gr$\left(  \mathfrak{S}_{n}e_{1},\phi_{c_{0}^{-1}E}\right)  \cong\mu_{p^{n}}$
and Gr$\left(  \mathfrak{S}_{n}e_{1},\phi_{1}\right)  \cong\mathbb{Z}%
/p^{n}\mathbb{Z},$ and hence these Breuil modules correspond to $RC_{p^{n}}$
and $\left(  RC_{p^{n}}\right)  ^{\ast}$ respectively.
\end{corollary}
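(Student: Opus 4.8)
The plan is to descend to the already-settled case $n=1$ by reducing the module modulo $p$, and then reassemble the full group scheme of order $p^{n}$. Since $E$ is Eisenstein we have $E\equiv u^{e}\pmod p$ and $c_{0}=E(0)/p\in W^{\times}$, so base change along the reduction $\mathfrak{S}_{n}\twoheadrightarrow\mathfrak{S}_{1}=k[[u]]$ carries $\left(\mathfrak{S}_{n}e_{1},\phi_{c_{0}^{-1}E}\right)$ to $\left(\mathfrak{S}_{1}e_{1},\phi_{c_{0}^{-1}u^{e}}\right)$ and $\left(\mathfrak{S}_{n}e_{1},\phi_{1}\right)$ to $\left(\mathfrak{S}_{1}e_{1},\phi_{1}\right)$, where $c_{0}^{-1}$ now denotes its image in $k^{\times}$. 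Note that I deliberately route the argument through the $n=1$ Hopf-algebra identifications rather than the type assertion, so that the conclusion matches the statement as worded.

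First I would record the compatibility of Gr with this reduction. From the short exact sequence $0\to p\mathfrak{M}\to\mathfrak{M}\to\mathfrak{M}/p\mathfrak{M}\to0$ in (Mod FI/$\mathfrak{S}$) and the fact that Gr is an exact anti-equivalence, $\operatorname{Gr}(\mathfrak{M}/p\mathfrak{M})$ is the kernel of $[p]$ on $\operatorname{Gr}(\mathfrak{M})$ (the cokernel of multiplication by $p$ on $\mathfrak{M}$ being $\mathfrak{M}/p\mathfrak{M}$). Combined with the Proposition classifying the $n=1$ modules -- which identifies $\left(\mathfrak{S}_{1}e_{1},\phi_{c_{0}^{-1}u^{e}}\right)$ with $RC_{p}$, hence with $\mu_{p}=\operatorname{Spec}RC_{p}$, and $\left(\mathfrak{S}_{1}e_{1},\phi_{1}\right)$ with $\left(RC_{p}\right)^{\ast}$, hence with $\mathbb{Z}/p$ -- this yields
\[
\operatorname{Gr}\left(\mathfrak{S}_{n}e_{1},\phi_{c_{0}^{-1}E}\right)[p]\cong\mu_{p},\qquad \operatorname{Gr}\left(\mathfrak{S}_{n}e_{1},\phi_{1}\right)[p]\cong\mathbb{Z}/p.
\]

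To upgrade these to the full order-$p^{n}$ groups I would proceed as follows. Both modules are free of rank one over $\mathfrak{S}_{n}=\mathfrak{S}/p^{n}$, so the associated group schemes have order $p^{n}$; and $\mu_{p^{n}}$, $\mathbb{Z}/p^{n}$ are precisely the \emph{multiplicative} and \emph{\'{e}tale} cyclic objects of this order, their standard Breuil-Kisin modules being the unit-root object $\phi_{1}$ and the $E$-multiple object $\phi_{c_{0}^{-1}E}$ (cf. \cite{Kisin06}, \cite{Kisin09}). By the preceding Proposition each of our modules is of type $\phi_{b}$ or $\phi_{bE}$; the reduction computation forces $\left(\mathfrak{S}_{n}e_{1},\phi_{1}\right)$ into the \'{e}tale type, with split $[p]$-torsion $\mathbb{Z}/p$, and $\left(\mathfrak{S}_{n}e_{1},\phi_{c_{0}^{-1}E}\right)$ into the multiplicative type, with $[p]$-torsion $\mu_{p}$. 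To pin the units I would use that reduction modulo $p$ detects the isomorphism class: since $p-1$ is prime to $p$, the group $1+pW_{n}$ is $(p-1)$-divisible, so $b\in\left(W_{n}^{\times}\right)^{p-1}$ iff $\bar b\in\left(k^{\times}\right)^{p-1}$; with part 2 of the preceding Proposition this identifies our modules as $\phi_{1}$ and $\phi_{c_{0}^{-1}E}$, giving $\operatorname{Gr}\left(\mathfrak{S}_{n}e_{1},\phi_{c_{0}^{-1}E}\right)\cong\mu_{p^{n}}$ and $\operatorname{Gr}\left(\mathfrak{S}_{n}e_{1},\phi_{1}\right)\cong\mathbb{Z}/p^{n}$. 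The Hopf-algebra claim then follows from Cartier duality, since $\mu_{p^{n}}=\operatorname{Spec}\left(RC_{p^{n}}\right)$ while $\mathbb{Z}/p^{n}=\operatorname{Spec}\left(\left(RC_{p^{n}}\right)^{\ast}\right)$.

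The main obstacle is this upgrading step: the $[p]$-torsion and the order $p^{n}$ do not by themselves determine the group scheme, since a priori several extensions could share the same $[p]$-torsion. The cleanest remedy is a duality reduction -- show that the module-level Cartier dual interchanges $\phi_{1}$ and $\phi_{c_{0}^{-1}E}$ (in Kisin's theory the dual Frobenius satisfies $\phi\cdot\phi^{\vee}=E$ up to a unit), so that the multiplicative case $\operatorname{Gr}\left(\phi_{c_{0}^{-1}E}\right)\cong\mu_{p^{n}}=\left(\mathbb{Z}/p^{n}\right)^{\vee}$ follows formally once the \'{e}tale case $\operatorname{Gr}\left(\phi_{1}\right)\cong\mathbb{Z}/p^{n}$ is known. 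The \'{e}tale case in turn reduces to verifying that the $G_{K}$-action on the generic fibre of $\operatorname{Gr}\left(\phi_{1}\right)$ is trivial, which should follow from the unit-root nature of the module together with the uniqueness in Lemma \ref{pslemma} applied over $W_{n}$.
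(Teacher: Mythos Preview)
The paper offers no proof of this Corollary; it is stated as an immediate consequence of the preceding Proposition, whose part~1 (citing \cite[1.1.15]{Kisin09}) already identifies the \'etale/multiplicative dichotomy for cyclic modules, and the specific identification of $\mu_{p^{n}}$ and $\mathbb{Z}/p^{n}\mathbb{Z}$ with these particular Breuil--Kisin modules is standard in Kisin's work. Your route is genuinely different and more self-contained: you descend modulo $p$ to the already-established $n=1$ case, read off the $[p]$-torsion via the exactness of $\mathrm{Gr}$, and then lift using part~2 of the Proposition together with the pleasant observation that $1+pW_{n}$ is $(p-1)$-divisible, so $W_{n}^{\times}/(W_{n}^{\times})^{p-1}\cong k^{\times}/(k^{\times})^{p-1}$. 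This is correct and buys you independence from a direct citation of Kisin's computation.

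The ``obstacle'' you flag at the end is not a genuine gap once you add one sentence you left implicit: the Breuil--Kisin modules of $\mathbb{Z}/p^{n}\mathbb{Z}$ and $\mu_{p^{n}}$ are themselves cyclic over $\mathfrak{S}_{n}$. Both group schemes lie in (Mod~FI/$\mathfrak{S}$), have order $p^{n}$, and are not killed by $p^{n-1}$; the only object $\bigoplus\mathfrak{S}_{n_{i}}$ of total length $n$ containing an element of additive order $p^{n}$ is $\mathfrak{S}_{n}$ itself. With cyclicity in hand, part~2 of the Proposition classifies the possibilities completely, and your reduction-mod-$p$ argument pins the unit class --- so no analysis of extensions, duality, or Galois actions is needed, and the final paragraph of your proposal can be dropped. (Incidentally, your assignment of types --- $\phi_{1}$ \'etale, $\phi_{c_{0}^{-1}E}$ multiplicative --- agrees with the $n=1$ Proposition and with the Corollary as stated; part~1 of the $n\geq 2$ Proposition in the paper appears to have the two labels interchanged.)
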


\begin{example}
Suppose $K=\mathbb{Q}_{p}\left[  \zeta\right]  $ where $\zeta$ is a primitive
$\left(  \zeta^{p^{n}}\right)  ^{\text{th}}$ root of unity. Then
\[
E=\frac{\left(  u+1\right)  ^{p^{n}}-1}{\left(  u+1\right)  ^{p^{n-1}}%
-1}=\frac{t^{p}-1}{t-1},\;t=\left(  u+1\right)  ^{p^{n-1}}%
\]
is its Eisenstein polynomial \cite[Lemma 3]{Birch67}. In particular,
$c_{0}=1.$ We have a map $\alpha:\left(  \mathfrak{S}_{n}e_{1}\left[
u^{-1}\right]  ,\phi_{1}\right)  \rightarrow\left(  \mathfrak{S}_{n}%
e_{1}\left[  u^{-1}\right]  ,\phi_{E}\right)  $ given by $\alpha\left(
e_{1}\right)  =\left(  1-t\right)  ^{-1}e_{1}.$ Notice that $\phi\left(
\left(  1-t\right)  ^{-1}\right)  =\left(  1-t^{p}\right)  ^{-1}.\;$Since%
\begin{align*}
\phi_{E}\alpha\left(  e_{1}\right)   &  =\phi_{E}\left(  \frac{1}{1-t}%
e_{1}\right) \\
&  =\left(  \frac{t^{p}-1}{t-1}\right)  \frac{1}{1-t^{p}}e_{1}\\
&  =\frac{1}{1-t}e_{1}=\alpha\phi\left(  e_{1}\right)
\end{align*}
we see that $\alpha$ is an isomorphism. This demonstrates the well-known fact
that $\mu_{p^{n}}\cong\mathbb{Z}/p^{n}\mathbb{Z}$ over a field $K$ containing
the $\left(  p^{n}\right)  ^{\text{th}}$ roots of unity.
\end{example}

The final result of the section shows the paucity of Hopf orders in group
rings arising from cyclic modules.

\begin{corollary}
The Hopf algebra associated to $\left(  \mathfrak{S}_{n}e_{1},\phi
_{bE}\right)  $ is a Hopf order in $KC_{p^{n}}$ if and only if it is
$RC_{p^{n}}.$
\end{corollary}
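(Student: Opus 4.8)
The plan is to combine the Laurent-series criterion for generic isomorphism with the preceding corollary, which identifies $\left(\mathfrak{S}_{n}e_{1},\phi_{c_{0}^{-1}E}\right)$ with $RC_{p^{n}}$. The ``if'' direction is immediate, since $RC_{p^{n}}$ is a Hopf order in $KC_{p^{n}}$ by definition and is attached to a Breuil-Kisin module of the stated form. For ``only if'', suppose the Hopf algebra $H$ attached to $\left(\mathfrak{S}_{n}e_{1},\phi_{bE}\right)$, $b\in W_{n}^{\times}$, is a Hopf order in $KC_{p^{n}}$. Then $H$ and $RC_{p^{n}}$ are orders in the same $K$-Hopf algebra, so the corresponding group schemes have isomorphic generic fibers; hence, by the criterion recalled above, there is an isomorphism of Breuil-Kisin modules $\left(\mathfrak{S}_{n}e_{1}\left[u^{-1}\right],\phi_{bE}\right)\cong\left(\mathfrak{S}_{n}e_{1}\left[u^{-1}\right],\phi_{c_{0}^{-1}E}\right)$ over $W_{n}\left(\left(u\right)\right)$.

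Such an isomorphism must be of the form $e_{1}\mapsto ge_{1}$ with $g\in W_{n}\left(\left(u\right)\right)^{\times}$, and commuting with the two semilinear maps forces $bEg=\phi\left(g\right)c_{0}^{-1}E$. Since $E\equiv u^{e}\ \left(\operatorname{mod}p\right)$ is a unit in $W_{n}\left(\left(u\right)\right)$, this simplifies to $\phi\left(g\right)=bc_{0}\,g$. Reducing modulo $p$ yields $\bar{g}\in k\left(\left(u\right)\right)^{\times}$ with $\phi\left(\bar{g}\right)=\overline{bc_{0}}\,\bar{g}$; comparing $u$-adic valuations (here $v\left(\phi\left(\bar{g}\right)\right)=p\,v\left(\bar{g}\right)$ while $v\left(\overline{bc_{0}}\,\bar{g}\right)=v\left(\bar{g}\right)$) gives $v\left(\bar{g}\right)=0$, and then comparing constant terms, exactly as in the proof of Lemma \ref{pslemma}, gives $\overline{bc_{0}}=\bar{g}_{0}^{\,p-1}\in\left(k^{\times}\right)^{p-1}$.

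It remains to upgrade this to $bc_{0}\in\left(W_{n}^{\times}\right)^{p-1}$. The reduction $W_{n}^{\times}\rightarrow k^{\times}$ splits via the Teichm\"{u}ller character, so $W_{n}^{\times}\cong k^{\times}\times\left(1+pW_{n}\right)$, and the abelian group $1+pW_{n}$ has exponent dividing $p^{n-1}$, so the $\left(p-1\right)$-st power map is a bijection on it. Consequently $\left(W_{n}^{\times}\right)^{p-1}$ consists of exactly those units whose reduction lies in $\left(k^{\times}\right)^{p-1}$; hence $bc_{0}\in\left(W_{n}^{\times}\right)^{p-1}$, and by the preceding proposition $\left(\mathfrak{S}_{n}e_{1},\phi_{bE}\right)\cong\left(\mathfrak{S}_{n}e_{1},\phi_{c_{0}^{-1}E}\right)$, whose Hopf algebra is $RC_{p^{n}}$. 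The real point is this last step: one must see that the mod-$p$ condition $\overline{bc_{0}}\in\left(k^{\times}\right)^{p-1}$ already forces the integral condition $bc_{0}\in\left(W_{n}^{\times}\right)^{p-1}$, which rests on $\left(p-1\right)$-st powers exhausting the pro-$p$ part $1+pW_{n}$; the valuation and coefficient rigidity in the middle paragraph is routine, being the same induction as in Lemma \ref{pslemma}.
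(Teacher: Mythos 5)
Your ``if'' direction, the identification of $\left(\mathfrak{S}_{n}e_{1},\phi_{c_{0}^{-1}E}\right)$ with $RC_{p^{n}}$, and the reduction of ``only if'' to the equation $\phi\left(g\right)=bc_{0}\,g$ in $W_{n}\left(\left(u\right)\right)$ are all fine, but the way you finish is where the argument breaks. You reduce mod $p$, keep only $\overline{bc_{0}}\in\left(k^{\times}\right)^{p-1}$, and then try to recover an integral statement from the structure of $W_{n}^{\times}$. The condition you recover, $bc_{0}\in\left(W_{n}^{\times}\right)^{p-1}$, is not what an isomorphism of the integral modules requires: an isomorphism $e_{1}\mapsto he_{1}$ from $\left(\mathfrak{S}_{n}e_{1},\phi_{bE}\right)$ to $\left(\mathfrak{S}_{n}e_{1},\phi_{c_{0}^{-1}E}\right)$ forces $bc_{0}=\phi\left(h_{0}\right)/h_{0}$ for some unit $h_{0}\in W_{n}$, with $\phi$ the Witt vector Frobenius (this is what the proof of the proposition for $n\geq2$ actually produces, and is how its displayed condition must be read). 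For $n\geq2$ and $k$ finite -- the situation of the paper, since $K/\mathbb{Q}_{p}$ is finite -- the set of Frobenius quotients $\phi\left(h_{0}\right)/h_{0}$ is strictly smaller than $\left(W_{n}^{\times}\right)^{p-1}$: when $k=\mathbb{F}_{p}$ the Frobenius is the identity on $W_{n}$, so every quotient is $1$, while $\left(W_{n}^{\times}\right)^{p-1}$ contains all of $1+pW_{n}$. Taking $bc_{0}=1+p$ gives a $b$ for which there is no isomorphism, integral or even generic; such a $b$ of course does not satisfy the hypothesis of the corollary, but it satisfies everything you retain after reducing mod $p$, so your chain of deductions cannot distinguish it from the good case. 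The step you single out as ``the real point'' is exactly the step that fails: reducing mod $p$ discards precisely the information that distinguishes orders, and no structure theory of $W_{n}^{\times}$ (Teichm\"{u}ller splitting, bijectivity of $\left(p-1\right)$-st powers on $1+pW_{n}$) can restore it.

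The repair is immediate from what you already had, and it is the paper's proof: do not reduce mod $p$. Stay with $\phi\left(g\right)=bc_{0}\,g$ in $W_{n}\left(\left(u\right)\right)$ and compare coefficients. Writing $g=\sum g_{i}u^{i}$, the equation gives $g_{i}=0$ whenever $p\nmid i$ and $bc_{0}\,g_{pi}=\phi\left(g_{i}\right)$ otherwise; looking at the lowest nonzero index (and then the lowest positive one), and using that $\phi$ is injective on $W_{n}$, forces $g=g_{0}\in W_{n}^{\times}$. In particular $v\left(g\right)=0$, so the generic isomorphism restricts to an isomorphism $\left(\mathfrak{S}_{n}e_{1},\phi_{bE}\right)\cong\left(\mathfrak{S}_{n}e_{1},\phi_{c_{0}^{-1}E}\right)$ of Breuil--Kisin modules themselves, and the order is $RC_{p^{n}}$. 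Nothing is lost by reduction because none is performed, and the constant-unit relation $\phi\left(g_{0}\right)=bc_{0}\,g_{0}$ obtained this way is exactly the correct isomorphism criterion that your $\left(p-1\right)$-st power argument was trying, and failing, to reach.
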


\begin{proof}
Clear since every isomorphism $\left(  \mathfrak{S}_{n}e_{1}\left[
u^{-1}\right]  ,\phi_{bE}\right)  \rightarrow\left(  \mathfrak{S}_{n}%
e_{1}\left[  u^{-1}\right]  ,\phi_{c_{0}^{-1}E}\right)  $ of the form
$e_{1}\mapsto he_{1}$ has $v\left(  h\right)  =0$ and hence restricts to an
isomorphism $\left(  \mathfrak{S}_{n}e_{1},\phi_{bE}\right)  \rightarrow
\left(  \mathfrak{S}_{n}e_{1},\phi_{c_{0}^{-1}E}\right)  .$
\end{proof}

\begin{remark}
Of course, a similar statement holds for orders in $\left(  KC_{p^{n}}\right)
^{\ast}$ or any other $K$-Hopf algebra of dimension $p^{n},\;n>2$ which is
realizable as $H\otimes_{R}K$ for some $R$-Hopf algebra $H$ corresponding to a
cyclic Breuil-Kisin module.
\end{remark}

\section{Hopf Orders in $KC_{p^{2}}$}

We now find the Breuil-Kisin modules corresponding to Hopf orders in
$KC_{p^{2}}.\;$The technique presented below is similar to the calculation in
\cite{Caruso09} of group schemes generically isomorphic to (``models of'')
$\mathbb{Z}/p^{2}\mathbb{Z}.$ Notice that all of the cyclic Breuil-Kisin
modules are objects in (Mod FI/$\mathfrak{S}$). The non-cyclic ones
constructed here will not be in this category, but as they are constructed
from extensions of objects in (Mod FI/$\mathfrak{S}$) they are in
(Mod/$\mathfrak{S}$).

\begin{theorem}
\label{p2}Let $0\leq j_{2}<j_{1}\leq e/\left(  p-1\right)  $ and pick $f\in
k\left(  \left(  u\right)  \right)  $ such that
\begin{align*}
v\left(  u^{e+j_{1}}\phi\left(  f\right)  -u^{e+j_{1}-\left(  p-1\right)
j_{2}}f\right)   &  \geq e-\left(  p-1\right)  \left(  j_{1}+j_{2}\right)  \\
v\left(  u^{j_{1}-pj_{2}}F+\left(  u^{e+j_{1}}\phi\left(  f\right)
-u^{e+j_{1}-j_{2}\left(  p-1\right)  }f\right)  \right)   &  \geq0.
\end{align*}
Let $\mathfrak{M=S}_{2}e_{1}+\mathfrak{S}_{2}e_{2}$ with $pe_{2}%
=u^{j_{1}-j_{2}}e_{1}.$ Let $\phi$ be the semilinear map on $\mathfrak{M}$
given by%
\begin{align*}
\phi\left(  e_{1}\right)   &  =c_{0}^{-1}u^{e-\left(  p-1\right)  j_{1}}%
e_{1}\\
\phi\left(  e_{2}\right)   &  =u^{-\left(  p-1\right)  j_{2}}c_{0}^{-1}%
Ee_{2}+\left(  u^{e+j_{1}}\phi\left(  f\right)  -u^{e+j_{1}-\left(
p-1\right)  j_{2}}f\right)  c_{0}^{-1}e_{1}.
\end{align*}
Then $\mathfrak{M}$ is a Breuil-Kisin module, and the corresponding group
scheme has generic fiber $\mu_{p^{2}}.$ Conversely, any such group scheme
isomorphic to $\mu_{p^{2}}$ over $K$ but not over $R$ has a Breuil-Kisin
module of the above form.
\end{theorem}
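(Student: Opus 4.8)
\bigskip
\noindent\textbf{Proof proposal.}\quad The plan is to verify the forward assertions by a direct computation with the given presentation, and then to run the construction backwards. Unravelling the notation, $\mathfrak{M}\cong(\mathfrak{S}e_{1}\oplus\mathfrak{S}e_{2})/\langle pe_{1},\ pe_{2}-u^{j_{1}-j_{2}}e_{1}\rangle$, so $pe_{1}=0$ and $p^{2}e_{2}=0$; since $\mathfrak{S}$ is a domain the two relators span a free rank-two submodule, whence $\mathfrak{M}$ has projective dimension $1$. Writing $\mathfrak{N}_{1}=\mathfrak{S}e_{1}$ and $\mathfrak{N}_{2}=\mathfrak{M}/\mathfrak{S}e_{1}$, there is an exact sequence $0\to\mathfrak{N}_{1}\to\mathfrak{M}\to\mathfrak{N}_{2}\to0$ in which, since $E\equiv u^{e}\ (\mathrm{mod}\ p)$, $\mathfrak{N}_{1}=(\mathfrak{S}_{1}e_{1},\phi_{c_{0}^{-1}u^{e-(p-1)j_{1}}})$ and $\mathfrak{N}_{2}=(\mathfrak{S}_{1}\overline{e}_{2},\phi_{c_{0}^{-1}u^{e-(p-1)j_{2}}})$ are the cyclic Breuil--Kisin modules of two Larson orders in $KC_{p}$ (here $e-(p-1)j_{i}\ge0$ because $j_{i}\le e/(p-1)$).

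To see that $\mathfrak{M}$ is a Breuil--Kisin module, first note that using $E=u^{e}+pF$ and $pe_{2}=u^{j_{1}-j_{2}}e_{1}$ one rewrites
\[
\phi(e_{2})=c_{0}^{-1}u^{e-(p-1)j_{2}}e_{2}+c_{0}^{-1}\bigl(u^{j_{1}-pj_{2}}F+u^{e+j_{1}}\phi(f)-u^{e+j_{1}-(p-1)j_{2}}f\bigr)e_{1},
\]
so $\phi(e_{2})\in\mathfrak{M}$ exactly when the $e_{1}$-coefficient lies in $k[[u]]$ --- which is precisely the second displayed hypothesis on $f$ --- while $\phi(e_{1})\in\mathfrak{M}$ is automatic. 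One then checks $p\,\phi(e_{2})=u^{p(j_{1}-j_{2})}\phi(e_{1})$ (both sides equal $c_{0}^{-1}u^{e+j_{1}-pj_{2}}e_{1}$, using $pe_{1}=0$), which is what is needed for $\phi$ to descend to $\mathfrak{M}$. Since $(1\otimes\phi)(\mathfrak{S}\otimes_{\phi}\mathfrak{M})=\mathfrak{S}\phi(e_{1})+\mathfrak{S}\phi(e_{2})$, the Breuil--Kisin condition reduces to $Ee_{1},Ee_{2}$ lying in this submodule; one has $Ee_{1}=u^{e}e_{1}=c_{0}u^{(p-1)j_{1}}\phi(e_{1})$, and a short computation gives
\[
Ee_{2}-c_{0}u^{(p-1)j_{2}}\phi(e_{2})=u^{e+j_{1}}\bigl(f-u^{(p-1)j_{2}}\phi(f)\bigr)e_{1}.
\]
Since $pe_{1}=0$, the submodule $\mathfrak{S}\phi(e_{1})$ equals $u^{e-(p-1)j_{1}}k[[u]]\,e_{1}$ (an $\mathfrak{S}\phi(e_{2})$-term is useless here, its $e_{2}$-component being uncancellable), so the right-hand side lies in the image if and only if $v\bigl(f-u^{(p-1)j_{2}}\phi(f)\bigr)\ge -pj_{1}$; rewriting $u^{e+j_{1}}\phi(f)-u^{e+j_{1}-(p-1)j_{2}}f=u^{e+j_{1}-(p-1)j_{2}}\bigl(u^{(p-1)j_{2}}\phi(f)-f\bigr)$ shows this to be precisely the first displayed hypothesis. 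Hence $\mathfrak{M}\in{}^{\prime}(\mathrm{Mod}/\mathfrak{S})$, and being of projective dimension $1$ it lies in $(\mathrm{Mod}/\mathfrak{S})$. For the generic fibre, invert $u$: there $e_{1}=u^{j_{2}-j_{1}}pe_{2}$, so $\mathfrak{M}[u^{-1}]=\mathfrak{S}_{2}[u^{-1}]e_{2}$ is free of rank one with $\phi(e_{2})=g\,e_{2}$, and one computes $g=c_{0}^{-1}u^{-(p-1)j_{2}}E\,(1+p\,\xi)$ with $\xi=E^{-1}\bigl(u^{e+pj_{2}}\phi(f)-u^{e+j_{2}}f\bigr)$. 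Reducing modulo $p$ and using $E\equiv u^{e}$ yields the identity $\xi\equiv(u^{j_{2}}f)^{p}-u^{j_{2}}f$, so that $h=u^{-j_{2}}(1+p\,u^{j_{2}}f)\in\mathfrak{S}_{2}[u^{-1}]^{\times}$ satisfies $g\,h=\phi(h)\,c_{0}^{-1}E$; therefore $e_{2}\mapsto h\,e$ is an isomorphism $\mathfrak{M}[u^{-1}]\xrightarrow{\ \sim\ }(\mathfrak{S}_{2}[u^{-1}]e,\phi_{c_{0}^{-1}E})$, and by the corollary identifying $\mathrm{Gr}(\mathfrak{S}_{2}e,\phi_{c_{0}^{-1}E})$ with $\mu_{p^{2}}$ together with the Laurent-series criterion, $\mathrm{Gr}(\mathfrak{M})$ has generic fibre $\mu_{p^{2}}$ (this step uses no hypothesis on $f$; the hypotheses only make $\mathfrak{M}$ an $\mathfrak{S}$-lattice).

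For the converse, let $G$ be finite flat with $G_{K}\cong\mu_{p^{2}}$ but $G\not\cong\mu_{p^{2}}$ over $R$, with Breuil--Kisin module $\mathfrak{M}'$; thus $\mathfrak{M}'[u^{-1}]\cong(\mathfrak{S}_{2}[u^{-1}]e,\phi_{c_{0}^{-1}E})$. The $p$-torsion $\mathfrak{N}_{1}:=\mathfrak{M}'[p]$ is a sub--Breuil--Kisin module (as $\phi$ preserves $p$-torsion) with quotient $\mathfrak{N}_{2}:=\mathfrak{M}'/\mathfrak{N}_{1}$; both are cyclic of rank $p$, hence orders in $\mu_{p}$, so by the corollary describing Hopf orders in $KC_{p}$ they are, after a suitable choice of generators $e_{1},\overline{e}_{2}$, equal to $(\mathfrak{S}_{1}e_{1},\phi_{c_{0}^{-1}u^{e-(p-1)j_{1}}})$ and $(\mathfrak{S}_{1}\overline{e}_{2},\phi_{c_{0}^{-1}u^{e-(p-1)j_{2}}})$ for uniquely determined $0\le j_{1},j_{2}\le e/(p-1)$. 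As an $\mathfrak{S}$-module $\mathfrak{M}'$ is a non-cyclic extension of $\mathfrak{S}_{1}$ by $\mathfrak{S}_{1}$ --- non-cyclic, since $\mathrm{Gr}$ of a cyclic module generically $\cong\mu_{p^{2}}$ is $\mu_{p^{2}}$ itself by the last corollary of Section~2, excluded here --- so the class in $\mathrm{Ext}^{1}_{\mathfrak{S}}(\mathfrak{S}_{1},\mathfrak{S}_{1})\cong k[[u]]$ has finite valuation $m\ge1$, i.e.\ (lifting $\overline{e}_{2}$) $pe_{2}=w\,e_{1}$ with $v(w)=m$. Equating the two expressions for $\phi(pe_{2})=p\,\phi(e_{2})$ forces $u^{e-(p-1)j_{2}}w=u^{e-(p-1)j_{1}}\phi(w)$, whence $m=j_{1}-j_{2}$ (so $j_{2}<j_{1}$) and $w=u^{m}\cdot(\text{constant in }\mathbb{F}_{p}^{\times})$; rescaling $e_{2}$ by that constant gives $pe_{2}=u^{j_{1}-j_{2}}e_{1}$ and $\phi(e_{2})=c_{0}^{-1}u^{e-(p-1)j_{2}}e_{2}+c\,e_{1}$ for some $c\in k[[u]]$. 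Inverting $u$ and reversing the computation above, the fact that $\mathfrak{M}'[u^{-1}]\cong(\mathfrak{S}_{2}[u^{-1}]e,\phi_{c_{0}^{-1}E})$ becomes the solvability of a twisted Artin--Schreier equation, which produces $f\in k((u))$ with $c=c_{0}^{-1}\bigl(u^{j_{1}-pj_{2}}F+u^{e+j_{1}}\phi(f)-u^{e+j_{1}-(p-1)j_{2}}f\bigr)$; this puts $\phi$ in the asserted form, and the two inequalities for $f$ hold automatically, being (as shown above) equivalent to $\phi(e_{2})\in\mathfrak{M}'$ and to the Breuil--Kisin condition for $\mathfrak{M}'$. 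The step I expect to be the main obstacle is precisely this last normalization in the converse: arranging the generators so that the module relation, both values $\phi(e_{i})$, and the parameters take the asserted shape at once, and in particular carrying out the twisted Artin--Schreier solvability that yields $f$; in the forward direction the only non-routine points are the recognition that the two hypotheses on $f$ are literally ``$\phi(e_{2})\in\mathfrak{M}$'' and ``$Ee_{2}\in(1\otimes\phi)(\mathfrak{S}\otimes_{\phi}\mathfrak{M})$'', and the Artin--Schreier identity $\xi\equiv(u^{j_{2}}f)^{p}-u^{j_{2}}f\ (\mathrm{mod}\ p)$ underlying the generic-fibre isomorphism.
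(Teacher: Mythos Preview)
Your argument is correct and follows essentially the same route as the paper: in both directions you use the short exact sequence $0\to\mathfrak{N}_{1}\to\mathfrak{M}\to\mathfrak{N}_{2}\to0$ of cyclic modules, normalize so that $pe_{2}=u^{j_{1}-j_{2}}e_{1}$ via the relation $\phi(pe_{2})=p\phi(e_{2})$, and then read off $\phi(e_{2})$ from the Laurent isomorphism $e_{2}\mapsto u^{-j_{2}}+pf$; your identification of the two hypotheses on $f$ with ``$\phi(e_{2})\in\mathfrak{M}$'' and ``$Ee_{2}\in\mathfrak{S}\phi(e_{1})+\mathfrak{S}\phi(e_{2})$'' is exactly the paper's, and your Artin--Schreier identity $\xi\equiv(u^{j_{2}}f)^{p}-u^{j_{2}}f$ is the computation the paper suppresses when it simply asserts the map $e_{2}\mapsto u^{-j_{2}}+pf$. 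The only point where the paper is more explicit is the converse normalization: it records the extra rescaling of $e_{2}$ by a $(p-1)$st root of unity $\zeta$ needed to force $\alpha(e_{2})\equiv u^{-j_{2}}\pmod p$ (equivalently $h'\equiv 1\pmod p$), which you should insert before writing $h=u^{-j_{2}}+pf$; conversely, you are more careful than the paper in checking projective dimension~$1$ and well-definedness of $\phi$ on the quotient presentation.
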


\begin{proof}
For the most part, it is easy to check that the module above produces a
Breuil-Kisin module. To show that $Ee_{2}$ is in the image of $1\otimes\phi$
we can write%
\[
Ee_{2}=c_{0}b^{\prime}u^{\left(  p-1\right)  j_{2}}\phi\left(  e_{1}\right)
+c_{0}u^{\left(  p-1\right)  j_{2}}\phi\left(  e_{2}\right)
\]
where $u^{e+j_{1}}\phi\left(  f\right)  -u^{e+j_{1}-\left(  p-1\right)  j_{2}%
}f=u^{e-\left(  p-1\right)  j_{1}}b^{\prime}.$ The map $e_{2}\mapsto
u^{-j_{2}}+pf$ (and hence $e_{1}\mapsto pu^{-j_{1}}$) establishes the
isomorphism $\mathfrak{M}\left[  u^{-1}\right]  \mathfrak{\rightarrow}%
W_{2}\left(  \left(  u\right)  \right)  $ that we need. The remainder of the
proof will establish that the conditions above are necessary.

Let $\mathfrak{M}$ be a Breuil-Kisin module over $W_{2}\left[  \left[
u\right]  \right]  $ with $\mathfrak{M}\left[  u^{-1}\right]  \cong
W_{2}\left(  \left(  u\right)  \right)  $, where $\phi_{0}$ on $W_{2}\left(
\left(  u\right)  \right)  $ is the semilinear map given by $\phi_{0}\left(
1\right)  =c_{0}^{-1}E.$ (We use the notation $\phi_{0}$ to eliminate
confusion since $\phi_{0}\left(  f\right)  =\phi\left(  f\right)  c_{0}^{-1}E$
for all $f\in W_{2}\left(  \left(  u\right)  \right)  .$) Let $\mathfrak{M}%
_{1}=\ker\left\{  p:\mathfrak{M\rightarrow M}\right\}  $ and let
$\mathfrak{M}_{2}=\mathfrak{M/M}_{1}.$ Then $p\mathfrak{M}_{1}=0=p\mathfrak{M}%
_{2}.$ Since $\phi\left(  pm\right)  =p\phi\left(  m\right)  ,$ $m\in
\mathfrak{M}$ we have that $\mathfrak{M}_{1}$ and $\mathfrak{M}_{2}$ are each
Breuil-Kisin modules (via $\phi|_{\mathfrak{M}_{1}}$ and $\bar{\phi}$
respectively). The isomorphism $\mathfrak{M}\left[  u^{-1}\right]  \cong
W_{2}\left(  \left(  u\right)  \right)  $ carries $p\mathfrak{M}\left[
u^{-1}\right]  $ to $pW_{2}\left(  \left(  u\right)  \right)  ,$ hence
$\mathfrak{M}_{1}\left[  u^{-1}\right]  $ and $\mathfrak{M}_{2}\left[
u^{-1}\right]  $ are each isomorphic to $k\left(  \left(  u\right)  \right)
.$ Thus there exist $e_{1}\in\mathfrak{M}_{1}$ and $\bar{e}_{2}\in
\mathfrak{M}_{2}$ such that
\begin{align*}
\mathfrak{M}_{1}  &  =\mathfrak{S}_{1}e_{1},\phi\left(  e_{1}\right)
=c_{0}^{-1}u^{e-\left(  p-1\right)  j_{1}}e_{1},\,0\leq j_{1}\leq\frac{e}%
{p-1}\\
\mathfrak{M}_{2}  &  =\mathfrak{S}_{1}\bar{e}_{2},\phi\left(  \bar{e}_{2}%
\right)  =c_{0}^{-1}u^{e-\left(  p-1\right)  j_{2}}\bar{e}_{2},\,0\leq
j_{2}\leq\frac{e}{p-1}.
\end{align*}
Pick $e_{2}\in\mathfrak{M}$ a lift of $\bar{e}_{2}.$ Then $\left\{
e_{1},e_{2}\right\}  $ generate $\mathfrak{M}\left[  u^{-1}\right]  $ as a
$W_{2}\left(  \left(  u\right)  \right)  $-module. Since $pe_{2}%
\in\mathfrak{M}_{1}$ it follows that
\[
pe_{2}=u^{\varepsilon}fe_{1}%
\]
for some $\,f\in k\left[  \left[  u\right]  \right]  ^{\times}\;$and
$\varepsilon\geq0.$ In fact, if $\varepsilon=0$ then $\mathfrak{M}$ is a
cyclic Breuil-Kisin module, and hence the corresponding group scheme is
isomorphic to $\mu_{p^{2}}$, thus we assume $\varepsilon>0.$ Applying $\phi$
to both sides gives us%
\[
p\phi\left(  e_{2}\right)  =u^{p\varepsilon}\phi\left(  f\right)  \phi\left(
e_{1}\right)
\]
and since $\phi\left(  e_{2}\right)  =c_{0}^{-1}u^{e-\left(  p-1\right)
j_{2}}e_{2}+pm$ for some $m\in\mathfrak{M}$ we get%
\begin{align*}
pc_{0}^{-1}u^{e-\left(  p-1\right)  j_{2}}e_{2}  &  =u^{p\varepsilon}%
\phi\left(  f\right)  c_{0}^{-1}u^{e-\left(  p-1\right)  j_{1}}e_{1}\\
&  =u^{\varepsilon\left(  p-1\right)  }\phi\left(  f\right)  c_{0}%
^{-1}u^{e-\left(  p-1\right)  j_{1}}f^{-1}\left(  u^{\varepsilon}fe_{1}\right)
\\
&  =u^{\varepsilon\left(  p-1\right)  }\phi\left(  f\right)  c_{0}%
^{-1}u^{e-\left(  p-1\right)  j_{1}}f^{-1}pe_{2}%
\end{align*}
and by comparing valuations we get%
\[
e-\left(  p-1\right)  j_{2}=\varepsilon\left(  p-1\right)  +e-\left(
p-1\right)  j_{1},
\]
i.e. $\varepsilon=j_{1}-j_{2}.$ Furthermore we see that $\phi\left(  f\right)
=f,$ so $f\in\mathbb{F}_{p}\left[  \left[  u^{p}\right]  \right]  ^{\times}.$
By replacing $e_{1}$ with $f^{-1}e_{1}$ we may assume $f=1.$ Thus%
\[
pe_{2}=u^{j_{1}-j_{2}}e_{1},
\]
and $j_{1}>j_{2}.$

To determine $\phi\left(  e_{2}\right)  $ we use the isomorphism
$\alpha:\mathfrak{M}\left[  u^{-1}\right]  \rightarrow W_{2}\left(  \left(
u\right)  \right)  $ which commutes with the $\phi$'s$.$ Let $\alpha\left(
e_{2}\right)  =u^{i}g,\;g\in W_{2}\left[  \left[  u\right]  \right]  ^{\times
}.$ By replacing $e_{2}$ by $\zeta e_{2},$ for some $\left(  p-1\right)
^{\text{st}}$ root of unity $\zeta$ we may assume $g\left(  0\right)
\equiv1\,\left(  \operatorname{mod}p\right)  .$ Then%
\[
\alpha\left(  \phi\left(  e_{2}\right)  \right)  =\phi_{0}\left(
\alpha\left(  e_{2}\right)  \right)  =u^{pi}\phi\left(  g\right)  c_{0}^{-1}E
\]
Since $\phi\left(  e_{2}\right)  \equiv c_{0}^{-1}u^{e-\left(  p-1\right)
j_{2}}e_{2}\,\left(  \operatorname{mod}\,p\right)  $ we have%
\begin{align*}
\alpha\left(  c_{0}^{-1}u^{e-\left(  p-1\right)  j_{2}}e_{2}+pm^{\prime
}\right)   &  =c_{0}^{-1}u^{e-\left(  p-1\right)  j_{2}}\alpha\left(
e_{2}\right)  +p\alpha\left(  m^{\prime}\right) \\
&  =u^{pi}\phi\left(  g\right)  c_{0}^{-1}E,
\end{align*}
for some $m^{\prime}\in\mathfrak{M}$, and so
\[
\alpha\left(  e_{2}\right)  \equiv u^{pi+\left(  p-1\right)  j_{2}-e}%
\phi\left(  g\right)  E\,\left(  \operatorname{mod}\,p\right)  .
\]
Since $\phi\left(  g\right)  E\equiv u^{e}\,\left(  \operatorname{mod}%
\,p\right)  $ we get%
\[
u^{i}\equiv u^{pi+\left(  p-1\right)  j_{2}}\,\left(  \operatorname{mod}%
p\right)
\]
and hence $i=pi+\left(  p-1\right)  j_{2},\,$i.e. $i=-j_{2}.$ Therefore,%
\[
\alpha\left(  e_{2}\right)  =u^{-j_{2}}+pf,\;f\in k\left(  \left(  u\right)
\right)  .
\]
Since $\left(  u^{j_{2}}-u^{2j_{2}}pf\right)  \left(  u^{-j_{2}}+pf\right)
=1\in W_{2}\left(  \left(  u\right)  \right)  $ we get%
\begin{align*}
\alpha\left(  \phi\left(  e_{2}\right)  \right)   &  =\phi_{0}\left(
u^{-j_{2}}+pf\right) \\
&  =\left(  u^{-pj_{2}}+p\phi\left(  f\right)  \right)  c_{0}^{-1}E\\
&  =\left(  u^{-pj_{2}}+p\phi\left(  f\right)  \right)  c_{0}^{-1}E\left(
u^{j_{2}}-u^{2j_{2}}pf\right)  \left(  u^{-j_{2}}+pf\right) \\
&  =\left(  u^{-pj_{2}}+p\phi\left(  f\right)  \right)  \left(  u^{j_{2}%
}-u^{2j_{2}}pf\right)  c_{0}^{-1}E\alpha\left(  e_{2}\right) \\
&  =\alpha\left(  \left(  u^{-pj_{2}}+p\phi\left(  f\right)  \right)  \left(
u^{j_{2}}-u^{2j_{2}}pf\right)  c_{0}^{-1}Ee_{2}\right)  .
\end{align*}
As $\alpha$ is an isomorphism we get%
\begin{align*}
\phi\left(  e_{2}\right)   &  =\left(  u^{-pj_{2}}+p\phi\left(  f\right)
\right)  \left(  u^{j_{2}}-u^{2j_{2}}pf\right)  c_{0}^{-1}Ee_{2}\\
&  =\left(  u^{-j_{2}\left(  p-1\right)  }+pu^{j_{2}}\phi\left(  f\right)
-u^{-j_{2}\left(  p-2\right)  }pf\right)  c_{0}^{-1}Ee_{2}\\
&  =u^{-j_{2}\left(  p-1\right)  }c_{0}^{-1}Ee_{2}+\left(  u^{j_{2}}%
\phi\left(  f\right)  -u^{-j_{2}\left(  p-2\right)  }f\right)  c_{0}%
^{-1}E\left(  pe_{2}\right) \\
&  =u^{-j_{2}\left(  p-1\right)  }c_{0}^{-1}Ee_{2}+\left(  u^{e+j_{1}}%
\phi\left(  f\right)  -u^{e+j_{1}-j_{2}\left(  p-1\right)  }f\right)
c_{0}^{-1}e_{1}.
\end{align*}
Now it is necessary that the right-hand side be in $\mathfrak{M}$ (as opposed
to $\mathfrak{M}\left[  u^{-1}\right]  $), therefore there are restrictions on
the choice of $f$. We will return to this issue at the end of the proof.

Since $\mathfrak{M}$ to be a Breuil-Kisin module, we require that that
$Ee_{1}$ and $Ee_{2}$ are in the image of $1\otimes\phi.$ As $Ee_{1}%
=c_{0}u^{\left(  p-1\right)  j_{1}}\phi\left(  e_{1}\right)  $ it suffices to
find $x,y\in W_{2}\left[  \left[  u\right]  \right]  $ such that $Ee_{2}%
=x\phi\left(  e_{1}\right)  +y\phi\left(  e_{2}\right)  .$ Thus%
\begin{align*}
pEe_{2}  &  =yu^{-\left(  p-1\right)  j_{2}}c_{0}^{-1}Epe_{2}\\
pu^{e}e_{2}  &  =yu^{e-\left(  p-1\right)  j_{2}}c_{0}^{-1}pe_{2}%
\end{align*}
and hence $y=c_{0}u^{\left(  p-1\right)  j_{2}}+py^{\prime}$ for some
$y^{\prime}\in k\left[  \left[  u\right]  \right]  .$ Substituting, we get%
\begin{align*}
Ee_{2}  &  =c_{0}^{-1}xu^{e-\left(  p-1\right)  j_{1}}e_{1}+\left(
c_{0}u^{\left(  p-1\right)  j_{2}}+py^{\prime}\right)  \phi\left(
e_{2}\right) \\
&  =c_{0}^{-1}xu^{e-\left(  p-1\right)  j_{1}}e_{1}+Ee_{2}+c_{0}%
^{-1}py^{\prime}u^{e-\left(  p-1\right)  j_{2}}e_{2}+u^{\left(  p-1\right)
j_{2}}\left(  u^{e+j_{1}}\phi\left(  f\right)  -u^{e+j_{1}-\left(  p-1\right)
j_{2}}f\right)  e_{1}%
\end{align*}
and so%
\begin{align*}
c_{0}^{-1}xu^{e-\left(  p-1\right)  j_{1}}e_{1}+c_{0}^{-1}py^{\prime
}u^{e-\left(  p-1\right)  j_{2}}e_{2}+u^{\left(  p-1\right)  j_{2}}\left(
u^{e+j_{1}}\phi\left(  f\right)  -u^{e+j_{1}-\left(  p-1\right)  j_{2}%
}f\right)  e_{1}  &  =0\\
\left(  c_{0}^{-1}xu^{e-\left(  p-1\right)  j_{1}}+c_{0}^{-1}y^{\prime
}u^{e-pj_{2}+j_{1}}+u^{\left(  p-1\right)  j_{2}}\left(  u^{e+j_{1}}%
\phi\left(  f\right)  -u^{e+j_{1}-\left(  p-1\right)  j_{2}}f\right)  \right)
e_{1}  &  =0
\end{align*}
As $e-pj_{2}+j_{1}>e-\left(  p-1\right)  j_{1}$, for this to have a solution
we require $\left(  p-1\right)  j_{2}+v\left(  u^{e+j_{1}}\phi\left(
f\right)  -u^{e+j_{1}-\left(  p-1\right)  j_{2}}f\right)  =v\left(  x\right)
+v\left(  u^{e-\left(  p-1\right)  j_{1}}\right)  ,$ i.e. $v\left(
u^{e+j_{1}}\phi\left(  f\right)  -u^{e+j_{1}-\left(  p-1\right)  j_{2}%
}f\right)  \geq e-\left(  p-1\right)  \left(  j_{1}+j_{2}\right)  .$ If we
write $u^{e+j_{1}}\phi\left(  f\right)  -u^{e+j_{1}-\left(  p-1\right)  j_{2}%
}f=u^{e-\left(  p-1\right)  \left(  j_{1}+j_{2}\right)  }b$ then we can solve
the above by setting $x=c_{0}b$ and $y^{\prime}=0.$

We require $\phi\left(  e_{2}\right)  \in\mathfrak{M}$, which of course is
equivalent to having $c_{0}\phi\left(  e_{2}\right)  \in\mathfrak{M.}$ We have%
\begin{align*}
c_{0}\phi\left(  e_{2}\right)   &  =u^{-j_{2}\left(  p-1\right)  }%
Ee_{2}+\left(  u^{e+j_{1}}\phi\left(  f\right)  -u^{e+j_{1}-j_{2}\left(
p-1\right)  }f\right)  e_{1}\\
&  =u^{-j_{2}\left(  p-1\right)  }\left(  u^{e}+pF\right)  e_{2}+\left(
u^{e+j_{1}}\phi\left(  f\right)  -u^{e+j_{1}-j_{2}\left(  p-1\right)
}f\right)  e_{1}\\
&  =u^{e-j_{2}\left(  p-1\right)  }e_{2}+\left(  u^{-j_{2}\left(  p-1\right)
}u^{j_{1}-j_{2}}F+\left(  u^{e+j_{1}}\phi\left(  f\right)  -u^{e+j_{1}%
-j_{2}\left(  p-1\right)  }f\right)  \right)  e_{1}\\
&  =u^{e-j_{2}\left(  p-1\right)  }e_{2}+\left(  u^{j_{1}-pj_{2}}F+\left(
u^{e+j_{1}}\phi\left(  f\right)  -u^{e+j_{1}-j_{2}\left(  p-1\right)
}f\right)  \right)  e_{1}%
\end{align*}
and since $u^{e-j_{2}\left(  p-1\right)  }e_{2}\in\mathfrak{M}$ this means we
need
\[
v\left(  u^{j_{1}-pj_{2}}F+\left(  u^{e+j_{1}}\phi\left(  f\right)
-u^{e+j_{1}-j_{2}\left(  p-1\right)  }f\right)  \right)  \geq0
\]
as desired.
\end{proof}

\begin{remark}
The second valuation condition is a bit more difficult to work with because it
depends on the Eisenstein polynomial. However, suppose we pick $j_{1}<j_{2}$
such that $e-\left(  p-1\right)  \left(  j_{1}+j_{2}\right)  \geq0.$ Then for
$f\in k\left(  \left(  u\right)  \right)  $ so that $v\left(  u^{e+j_{1}}%
\phi\left(  f\right)  -u^{e+j_{1}-\left(  p-1\right)  j_{2}}f\right)  \geq
e-\left(  p-1\right)  \left(  j_{1}+j_{2}\right)  $ we see that $\left(
u^{e+j_{1}}\phi\left(  f\right)  -u^{e+j_{1}-\left(  p-1\right)  j_{2}%
}f\right)  e_{1}\in\mathfrak{M}$ and hence $\left(  u^{j_{1}-pj_{2}}F+\left(
u^{e+j_{1}}\phi\left(  f\right)  -u^{e+j_{1}-j_{2}\left(  p-1\right)
}f\right)  \right)  e_{1}\in\mathfrak{M}$ precisely when $u^{j_{1}-pj_{2}%
}e_{1}\in\mathfrak{M.}$ Thus if $e-\left(  p-1\right)  \left(  j_{1}%
+j_{2}\right)  \geq0$ then the second condition is satisfied if and only if
$j_{1}\geq pj_{2}.$
\end{remark}

\section{Laurent Series and Hopf Orders}

Since each Breuil-Kisin module $\mathfrak{M}$ which is an order in $KC_{p^{n}%
}$ must satisfy $\mathfrak{M}\left[  u^{-1}\right]  \cong\left(
\mathfrak{S}_{n}e_{1}\left[  u^{-1}\right]  ,\phi_{c_{0}^{-1}E}\right)  ,$ as
$\mathfrak{S}$-modules we have $\mathfrak{M}\left[  u^{-1}\right]  \cong
W_{n}\left(  \left(  u\right)  \right)  .$ Thus Hopf orders can be identified
by looking at certain Laurent series.

As an example, let us return to the case $n=1.$ We have found that the Hopf
orders correspond to Breuil-Kisin modules of the form $\left(  \mathfrak{S}%
_{1}e_{1},\phi_{c_{0}^{-1}u^{r}}\right)  $ with $e-r=j\left(  p-1\right)  $
for some $j$. By following the induced isomorphism $\mathfrak{M}\left[
u^{-1}\right]  \rightarrow k\left(  \left(  u\right)  \right)  ,$ which here
can be chosen to be $e_{1}\mapsto u^{-j}e_{1},$ we have $e_{1}$ corresponding
to the Laurent series $u^{-j}.$ Thus, $u^{-j}$ encodes all of the information
concerning this Hopf order. In other words, the set
\[
\left\{  u^{-j}\,|\,0\leq j\leq\left\lfloor \frac{e}{p-1}\right\rfloor
\right\}  \subset k\left(  \left(  u\right)  \right)  .
\]
parameterizes all Hopf orders in $KC_{p}.$

Similarly, the Breuil-Kisin module $\mathfrak{M}$ for a Hopf order in
$KC_{p^{2}}$ is generated by at most two elements $e_{1}$ and $e_{2}$ such
that there is an isomorphism $\alpha:\mathfrak{M}\left[  u^{-1}\right]
\rightarrow W_{2}\left(  \left(  u\right)  \right)  .$ From the work above we
see that $\alpha\left(  e_{2}\right)  =u^{-j_{2}}+pf$ and $\alpha\left(
e_{1}\right)  =\alpha\left(  u^{j_{2}-j_{1}}pe_{2}\right)  =pu^{-j_{1}}$,
where $v\left(  u^{e+j_{1}}\phi\left(  f\right)  -u^{e+j_{1}-\left(
p-1\right)  j_{2}}f\right)  \geq e-\left(  p-1\right)  \left(  j_{1}%
+j_{2}\right)  $ and $v\left(  u^{j_{1}-pj_{2}}F+\left(  u^{e+j_{1}}%
\phi\left(  f\right)  -u^{e+j_{1}-j_{2}\left(  p-1\right)  }f\right)  \right)
\geq0.$ Thus the set%
\[%
\begin{array}
[c]{c}%
\{\left(  pu^{-j_{1}},u^{-j_{2}}+pf\right)  \,|\,j_{1}\geq pj_{2},\;v\left(
u^{e+j_{1}}\phi\left(  f\right)  -u^{e+j_{1}-\left(  p-1\right)  j_{2}%
}f\right)  \geq e-\left(  p-1\right)  \left(  j_{1}+j_{2}\right)  ,\\
v\left(  u^{j_{1}-pj_{2}}F+\left(  u^{e+j_{1}}\phi\left(  f\right)
-u^{e+j_{1}-j_{2}\left(  p-1\right)  }f\right)  \right)  \}\subset\left(
k\left(  \left(  u\right)  \right)  \right)  ^{2}%
\end{array}
\]
parameterizes all Hopf orders in $KC_{p^{2}}.$ It seems possible that one
could find Hopf orders in $KC_{p^{n}}$ by picking $n$-tuples of Laurent series
satisfying certain properties.

To what extent can this idea be applied to $n>2$? Pick $f_{1},f_{2}%
,\dots,f_{n}\in W_{n}\left(  \left(  u\right)  \right)  $ such that
$f_{1}\notin pW_{n}\left(  \left(  u\right)  \right)  $ and%
\[
f_{i}=\sum_{i=1}^{n}s_{ij}c_{0}^{-1}\phi\left(  f_{j}\right)  ,\;s_{ij}\in
W_{n}\left[  \left[  u\right]  \right]  .
\]
Let $\mathfrak{M}\ $be the $\mathfrak{S}$-module generated by $\left\{
e_{1},e_{2},\dots,e_{n}\right\}  $ such that $\alpha:W_{n}\left(  \left(
u\right)  \right)  \rightarrow\mathfrak{M}\left[  u^{-1}\right]
,\;\alpha\left(  f_{i}\right)  =e_{i}\;$is an $\mathfrak{S}$-module
isomorphism. Define $\phi:\mathfrak{M\rightarrow M}$ by $\phi\left(
e_{i}\right)  =E\alpha\left(  c_{0}^{-1}\phi\left(  f_{i}\right)  \right)  .$
Since%
\begin{align*}
Ee_{i}  &  =E\alpha\left(  f_{i}\right)  =E\alpha\left(  \sum_{i=1}^{n}%
s_{ij}c_{0}^{-1}\phi\left(  f_{j}\right)  \right) \\
&  =\sum_{i=1}^{n}s_{ij}E\alpha\left(  c_{0}^{-1}\phi\left(  f_{i}\right)
\right) \\
&  =\sum_{i=1}^{n}s_{ij}\phi\left(  e_{j}\right)
\end{align*}
we see that $\left(  \mathfrak{M},\phi\right)  $ is a Breuil-Kisin module.
Furthermore, if we let $\phi_{0}$ be the semilinear map on $W_{n}\left(
\left(  u\right)  \right)  $ given by $\phi_{0}\left(  1\right)  =c_{0}^{-1}E$
we get%
\begin{align*}
\phi\left(  \alpha\left(  f_{i}\right)  \right)   &  =\phi\left(
e_{i}\right)  =E\alpha\left(  c_{0}^{-1}\phi\left(  f_{i}\right)  \right) \\
\alpha\left(  \phi_{0}\left(  f_{i}\right)  \right)   &  =\alpha\left(
c_{0}^{-1}E\phi\left(  f_{i}\right)  \right)  =E\alpha\left(  c_{0}^{-1}%
\phi\left(  f_{i}\right)  \right)  ,
\end{align*}
and so $\alpha\phi_{0}=\phi\alpha$ and we get:

\begin{proposition}
There is a correspondence%
\[
\left\{  \mathbf{f}\in\left(  W_{n}\left(  \left(  u\right)  \right)  \right)
^{n}\,|\,\mathbf{f}=A\mathbf{\Phi}\,\left(  \mathbf{f}\right)  ,\,A\in
M_{n}\left(  W_{n}\left[  \left[  u\right]  \right]  \right)  ,\;f_{1}%
\not \equiv 0\,\left(  \operatorname{mod}p\right)  \right\}  ,
\]
where $\mathbf{f}=\left(  f_{1},f_{2},\dots,f_{n}\right)  $ and $\mathbf{\Phi
}\left(  \mathbf{f}\right)  =\left(  \phi\left(  f_{1}\right)  ,\phi\left(
f_{2}\right)  ,\dots,\phi\left(  f_{n}\right)  \right)  ;$ and $R$-Hopf orders
in $KC_{p^{n}}.$
\end{proposition}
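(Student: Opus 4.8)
The plan is to establish the correspondence by constructing explicit maps in both directions and checking they are mutually inverse up to isomorphism. The bulk of the verification is already done in the discussion preceding the proposition: given a vector $\mathbf{f}$ in the indicated set, one defines $\mathfrak{M}$ to be the $\mathfrak{S}$-module equipped with an isomorphism $\alpha:W_{n}\left(\left(u\right)\right)\to\mathfrak{M}\left[u^{-1}\right]$ carrying $f_{i}$ to $e_{i}$, sets $\phi\left(e_{i}\right)=E\alpha\left(c_{0}^{-1}\phi\left(f_{i}\right)\right)$, and the displayed computation shows $Ee_{i}=\sum_{j}s_{ij}\phi\left(e_{j}\right)$, so that $E\mathfrak{M}$ lies in the image of $1\otimes\phi$ and $\left(\mathfrak{M},\phi\right)$ is an object of ${}^{\prime}$(Mod/$\mathfrak{S}$). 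The conditions $f_{1}\not\equiv 0\,(\operatorname{mod}p)$ guarantees that $\mathfrak{M}/p\mathfrak{M}$ is nonzero and, together with the extension structure, that $\mathfrak{M}$ has projective dimension $1$; hence $\mathfrak{M}\in$ (Mod/$\mathfrak{S}$). Since $\alpha\phi_{0}=\phi\alpha$ by the last computation, $\mathfrak{M}\left[u^{-1}\right]\cong\left(\mathfrak{S}_{n}e_{1}\left[u^{-1}\right],\phi_{c_{0}^{-1}E}\right)$, so by the criterion quoted from \cite{Kisin05} the associated Hopf algebra is generically isomorphic to $RC_{p^{n}}\otimes_{R}K=KC_{p^{n}}$, i.e.\ it is an $R$-Hopf order in $KC_{p^{n}}$.

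Conversely, suppose $H$ is an $R$-Hopf order in $KC_{p^{n}}$, with corresponding Breuil-Kisin module $\mathfrak{M}\in$ (Mod/$\mathfrak{S}$). Generic isomorphism with $KC_{p^{n}}$ gives an isomorphism $\alpha:\mathfrak{M}\left[u^{-1}\right]\xrightarrow{\ \sim\ }\left(\mathfrak{S}_{n}e_{1}\left[u^{-1}\right],\phi_{c_{0}^{-1}E}\right)$ commuting with the respective $\phi$'s; identifying the target with $W_{n}\left(\left(u\right)\right)$ equipped with $\phi_{0}$, we transport the $\mathfrak{S}$-generators of $\mathfrak{M}$ to elements $f_{1},\dots,f_{n}\in W_{n}\left(\left(u\right)\right)$. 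Since $\mathfrak{M}$ is finitely generated over $\mathfrak{S}$, one may take $n$ generators (padding with zeros if fewer suffice, though one must be careful that the nonzero-mod-$p$ generator is included); $f_{1}\not\equiv 0\,(\operatorname{mod}p)$ records that $\mathfrak{M}/p\mathfrak{M}\neq 0$. The defining property $E\mathfrak{M}\subseteq\left(1\otimes\phi\right)\left(\mathfrak{S}\otimes_{\phi}\mathfrak{M}\right)$ means each $Ee_{i}$ is an $\mathfrak{S}$-linear combination of the $\phi\left(e_{j}\right)$; translating through $\alpha$ and using $\phi\left(e_{j}\right)\leftrightarrow c_{0}^{-1}E\phi\left(f_{j}\right)$ and cancelling the unit $E$ (valid in $W_{n}\left(\left(u\right)\right)$), we obtain $f_{i}=\sum_{j}s_{ij}c_{0}^{-1}\phi\left(f_{j}\right)$ with $s_{ij}\in W_{n}\left[\left[u\right]\right]$, i.e.\ $\mathbf{f}=A\mathbf{\Phi}\left(\mathbf{f}\right)$ with $A=\left(c_{0}^{-1}s_{ij}\right)$. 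Running the forward construction on this $\mathbf{f}$ recovers $\mathfrak{M}$, so the two assignments are mutually inverse up to isomorphism.

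The main obstacle is making the two directions into an honest bijection rather than merely a surjection-in-each-direction, and in particular pinning down what equivalence relation one should put on the set of vectors $\mathbf{f}$. Different choices of $\mathfrak{S}$-generators of $\mathfrak{M}$, and different choices of the generic isomorphism $\alpha$, will produce different $\mathbf{f}$'s that yield the same Hopf order; so the honest statement is a correspondence between Hopf orders and such vectors \emph{modulo the action of $\mathrm{GL}$-type change-of-generator matrices over $\mathfrak{S}$ together with the automorphisms of $\left(W_{n}\left(\left(u\right)\right)\!,\phi_{0}\right)$}. I would either state the proposition at this level of ``there is a correspondence'' (as the paper does) and simply exhibit the two constructions with the remark that they are inverse up to the obvious equivalences, or else spell out the equivalence relation explicitly. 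A secondary technical point to handle with care is the projective-dimension-$1$ condition: one must confirm that a module presented as $W_{n}\left(\left(u\right)\right)$ intersected appropriately with $\mathfrak{S}$-spans of the $e_{i}$ really does lie in (Mod/$\mathfrak{S}$) and not merely in ${}^{\prime}$(Mod/$\mathfrak{S}$); this follows because $\mathfrak{M}$ is built as an iterated extension of the $\mathfrak{S}_{1}$-modules appearing in the filtration by powers of $p$, each of which is finite free over $\mathfrak{S}/p\mathfrak{S}$, so the characterization of (Mod/$\mathfrak{S}$) as extensions of finite free $\mathfrak{S}/p\mathfrak{S}$-modules quoted from \cite[2.3.2]{Kisin06} applies.
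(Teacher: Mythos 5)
Your outline is essentially sound and, on the forward direction, it is the same argument the paper makes: everything except category membership is delegated to the construction preceding the proposition, and the paper's entire proof is the observation that $\mathfrak{M}$ lies in (Mod/$\mathfrak{S}$) rather than merely in $^{\prime}$(Mod/$\mathfrak{S}$), which it attributes to the canonical surjection $\mathfrak{S}^{n}\rightarrow\mathfrak{M}$ (the substantive point being that $\mathfrak{M}\subset W_{n}\left(\left(u\right)\right)$ is $u$-torsion free, so it has depth at least one and projective dimension one). You reach that membership through Kisin's extension characterization instead, and you additionally write out the converse direction and the failure of injectivity, which the paper only treats in the remarks after the proposition; that extra material is consistent with the paper's intent and is worth having.

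Two points need tightening. First, the hypothesis $f_{1}\not\equiv0\,\left(\operatorname{mod}p\right)$ is not about $\mathfrak{M}/p\mathfrak{M}$ being nonzero or about projective dimension: since $W_{n}\left(\left(u\right)\right)$ is local with maximal ideal $pW_{n}\left(\left(u\right)\right)$, the condition says exactly that $f_{1}$ is a unit, which is what forces $\sum_{i}W\left(\left(u\right)\right)f_{i}=W_{n}\left(\left(u\right)\right)$, i.e.\ that the map $\alpha$ you ``equip'' $\mathfrak{M}$ with really is an isomorphism onto $\mathfrak{M}\left[u^{-1}\right]$ and the resulting Hopf algebra is an order in all of $KC_{p^{n}}$ rather than in a proper quotient; dually, in your converse it is why one of the transported generators can be taken to be a unit mod $p$. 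Second, in your projective-dimension argument the filtration must be by the kernels $\mathfrak{M}\left[p^{i}\right]=\mathfrak{M}\cap p^{i}W_{n}\left(\left(u\right)\right)$, not by the submodules $p^{i}\mathfrak{M}$: the graded pieces $p^{i}\mathfrak{M}/p^{i+1}\mathfrak{M}$ can have $u$-torsion (take $\mathfrak{M}=\mathfrak{S}\cdot pu^{-1}+\mathfrak{S}\cdot1\subset W_{2}\left(\left(u\right)\right)$, where the class of $pu^{-1}$ is nonzero but killed by $u$ modulo $p\mathfrak{M}=p\mathfrak{S}$), so they need not be free over $\mathfrak{S}_{1}$. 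With the kernel filtration, multiplication by $p^{i-1}$ embeds $\mathfrak{M}\left[p^{i}\right]/\mathfrak{M}\left[p^{i-1}\right]$ into $k\left(\left(u\right)\right)$, so each graded piece is a finitely generated $u$-torsion-free $k\left[\left[u\right]\right]$-module, hence free of rank at most one; this simultaneously gives projective dimension one (a purely module-theoretic statement, so you need not worry about whether the filtration steps carry a $\phi$-structure satisfying the $E$-condition) and also supplies the missing justification in your converse that $n$ generators of $\mathfrak{M}$ over $\mathfrak{S}$ always suffice, which you had only asserted.
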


\begin{proof}
The only remaining detail is to show that the $\mathfrak{M}$ as constructed
above is an object in $\left(  \text{Mod/}\mathfrak{S}\right)  $ (as opposed
to simply an object in $^{\prime}\left(  \text{Mod/}\mathfrak{S}\right)  $).
This, however, follows from the fact that there is a canonical surjection
$\mathfrak{S}^{n}\rightarrow\mathfrak{M}$.
\end{proof}

While it would be convenient to have a theory of Hopf orders based solely on
the ring of Laurent series, there seem to be two obstacles to using this
proposition in practice. One, it seems difficult in general to find the
$f_{i}$'s that satisfy the above conditions. Perhaps it would be possible to
proceed inductively, in which case the matrix $A$ would be upper-triangular.
The other problem is that this is not a one-to-one correspondence: many
choices of $\mathbf{f}$ lead to isomorphic Breuil-Kisin modules. For example,
in the case $n=1$ the correspondence becomes%
\begin{align*}
\left\{  f\in k\left(  \left(  u\right)  \right)  ^{\times}\,|\,f=s\phi\left(
f\right)  ,\;v\left(  s\right)  \geq0\right\}   &  =\left\{  f=au^{j}%
+u^{j+1}f^{\prime}\in k\left(  \left(  u\right)  \right)  \,|\,a\in\left(
k^{\times}\right)  ^{p-1},\;j\geq pj\right\} \\
&  =\left\{  f=au^{j}+u^{j+1}f^{\prime}\in k\left(  \left(  u\right)  \right)
\,|\,a\in\left(  k^{\times}\right)  ^{p-1},\;j\leq0\right\}  ,
\end{align*}
and we see that many different choices of $f$ correspond to the same $R$-Hopf
order. While it is easy to determine which $f$ are equivalent in this sense
when $n=1$, it seems much more involved for larger $n$.

\bibliographystyle{commalg}
\bibliography{Koch}

\begin{thebibliography}{26}
\providecommand{\natexlab}[1]{#1}

\bibitem[{Birch(1967)}]{Birch67}
Birch, B.~J. (1967), Cyclotomic fields and {K}ummer extensions. In: Algebraic
  {N}umber {T}heory ({P}roc. {I}nstructional {C}onf., {B}righton, 1965),
  Thompson, Washington, D.C., 85--93.

\bibitem[{Breuil(1998)}]{Breuil98}
Breuil, C. (1998), Schémas en groupes et corps des normes. \emph{unpublished} .

\bibitem[{Breuil(2000)}]{Breuil00}
Breuil, C. (2000), Groupes {$p$}-divisibles, groupes finis et modules
  filtr\'es. \emph{Ann. of Math. (2)} 152(2):489--549.

\bibitem[{Breuil, Conrad, Diamond, and
  Taylor(2001)}]{BreuilConradDiamondTaylor01}
Breuil, C., Conrad, B., Diamond, F., Taylor, R. (2001), On the modularity of
  elliptic curves over {$\bold Q$}: wild 3-adic exercises. \emph{J. Amer. Math.
  Soc.} 14(4):843--939 (electronic).

\bibitem[{Byott(1993{\natexlab{a}})}]{Byott93a}
Byott, N.~P. (1993{\natexlab{a}}), Cleft extensions of {H}opf algebras.
  \emph{J. Algebra} 157(2):405--429.

\bibitem[{Byott(1993{\natexlab{b}})}]{Byott93b}
Byott, N.~P. (1993{\natexlab{b}}), Cleft extensions of {H}opf algebras. {II}.
  \emph{Proc. London Math. Soc. (3)} 67(2):277--304.

\bibitem[{Caruso(2010)}]{Caruso09}
Caruso, X. (2010), Classification of integral models of
  {$(\mathbb{Z}/p^2\mathbb{Z})_K$} via {B}reuil-{K}isin theory. \emph{preprint}
  .

\bibitem[{Childs(2000)}]{Childs00}
Childs, L. (2000), Taming wild extensions: {H}opf algebras and local {G}alois
  module theory, volume~80 of \emph{Mathematical Surveys and Monographs}.
  Providence, RI: American Mathematical Society.

\bibitem[{Childs and Moss(1994)}]{ChildsMoss94}
Childs, L., Moss, D. (1994), Hopf algebras and local {G}alois module theory.
  In: Advances in {H}opf algebras ({C}hicago, {IL}, 1992), New York: Dekker,
  volume 158 of \emph{Lecture Notes in Pure and Appl. Math.}, 1--24.

\bibitem[{Childs and Smith~III(2005)}]{ChildsSmith05}
Childs, L., Smith~III, H. (2005), Dual {H}opf orders in group rings of
  elementary abelian {$p$}-groups. \emph{J. Algebra} 294(2):489--518.

\bibitem[{Childs and Underwood(2004)}]{ChildsUnderwood04}
Childs, L., Underwood, R. (2004), Duals of formal group {H}opf orders in cyclic
  groups. \emph{Illinois J. Math.} 48(3):923--940.

\bibitem[{Greither(1992)}]{Greither92}
Greither, C. (1992), Extensions of finite group schemes, and {H}opf {G}alois
  theory over a complete discrete valuation ring. \emph{Math. Z.}
  210(1):37--67.

\bibitem[{Greither and Childs(1998)}]{GreitherChilds98}
Greither, C., Childs, L. (1998), {$p$}elementary group schemes -- constructions
  and {R}aynaud's theory. In: Hopf Algebras, Polynomial Formal Groups, and
  Raynaud Orders, Providence, RI: Amer. Math. Soc., volume 136 of \emph{Mem.
  Amer. Math. Soc.}, 91--118.

\bibitem[{Kisin(2006)}]{Kisin06}
Kisin, M. (2006), Crystalline representations and {$F$}-crystals. In: Algebraic
  geometry and number theory, Boston, MA: Birkh\"auser Boston, volume 253 of
  \emph{Progr. Math.}, 459--496.

\bibitem[{Kisin(2007)}]{Kisin05}
Kisin, M. (2007), Modularity of 2-dimensional {G}alois representations. In:
  Current developments in mathematics, 2005, Int. Press, Somerville, MA,
  191--230.

\bibitem[{Kisin(2010)}]{Kisin09}
Kisin, M. (2010), Moduli of finite flat group schemes, and modularity.
  \emph{Annals of Math.} To appear.

\bibitem[{Koch(2001)}]{Koch01b}
Koch, A. (2001), Monogenic bialgebras over finite fields and rings of {W}itt
  vectors. \emph{J. Pure Appl. Algebra} 163(2):193--207.

\bibitem[{Koch(2003)}]{Koch03}
Koch, A. (2003), Monogenic {H}opf algebras and local {G}alois module theory.
  \emph{J. Algebra} 264(2):408--419.

\bibitem[{Koch(2005)}]{Koch05}
Koch, A. (2005), Monogenic {H}opf algebras over discrete valuation rings with
  low ramification. \emph{J. Algebra} 286(2):405--420.

\bibitem[{Koch(2007)}]{Koch07a}
Koch, A. (2007), Hopf orders via {B}reuil modules. \emph{J. Algebra}
  317(1):291--305.

\bibitem[{Koch(2010)}]{Koch09}
Koch, A. (2010), Monogenic {H}opf algebras representing commutative {$p$}-group
  schemes. \emph{preprint} .

\bibitem[{Larson(1976)}]{Larson76}
Larson, R. (1976), Hopf algebra orders determined by group valuations. \emph{J.
  Algebra} 38(2):414--452.

\bibitem[{Tate and Oort(1970)}]{TateOort70}
Tate, J., Oort, F. (1970), Group schemes of prime order. \emph{Ann. Sci.
  \'Ecole Norm. Sup. (4)} 3:1--21.

\bibitem[{Underwood(1994)}]{Underwood94}
Underwood, R. (1994), {$R$}-{H}opf algebra orders in {$KC\sb {p\sp 2}$}.
  \emph{J. Algebra} 169(2):418--440.

\bibitem[{Underwood(1996)}]{Underwood96}
Underwood, R. (1996), The valuative condition and {$R$}-{H}opf algebra orders
  in {$KC\sb {p\sp 3}$}. \emph{Amer. J. Math.} 118(4):701--743.

\bibitem[{Underwood and Childs(2006)}]{UnderwoodChilds06}
Underwood, R., Childs, L. (2006), Duality for {H}opf orders. \emph{Trans. Amer.
  Math. Soc.} 358(3):1117--1163 (electronic).

\end{thebibliography}
\end{document}